\def\blfootnote{\xdef\@thefnmark{}\@footnotetext}
\newcommand\ccnote{
    \blfootnote{The research is supported by the National Key R and D Program of China 2020YFA0713100, NSFC No 11721101 and NSFC No.12301077.}
}
\numberwithin{equation}{section}
\renewcommand{\le}{\leqslant}
\renewcommand{\leq}{\leqslant}
\renewcommand{\ge}{\geqslant}
\renewcommand{\geq}{\geqslant}
\renewcommand{\mathbb}{\varmathbb}
\newtheorem{theorem}{Theorem}[section]
\newtheorem{lemma}[theorem]{Lemma}
\newtheorem{corollary}[theorem]{Corollary}
\newtheorem{remark}[theorem]{Remark}
\newtheorem{pro}[theorem]{Proposition}
\newtheorem{defi}[theorem]{Definition}
\newtheorem{conj}[theorem]{Conjecture}
\def\begfig {
\begin{figure}
\small }
\def\endfig {
\normalsize
\end{figure}
}
\begin{document}
%\firstpage{}\volume{}\volumeyear{}\copyrightyear{}\doiyear{}\doi{}\received{XXX}\revised{XXX}
\thispagestyle{empty}

\ccnote

\title[]{Optimal  rigidity estimates for varifolds  almost minimizing the Willmore energy}

\author{Yuchen Bi}
\author{Jie Zhou}

\address{Yuchen Bi\\Beijing International Center for Mathematical Research, Peking University, Beijing, 100871, P. R. China}
\email{ycbi@bicmr.pku.edu.cn}
\address{Jie Zhou\\ School of Mathematical Sciences\\ Capital Normal University\\ 105 West Third Ring Road North\\ Haidian District\\ Beijing\\ 100048 P.R. China }
\email{zhoujiemath@cnu.edu.cn}
\maketitle

\noindent \textbf{Abstract.} \textit{For an integral  $2$-varifold $V=\underline{v}(\Sigma,\theta_{\ge 1})$ in $\mathbb{R}^n$ with generalized mean curvature $H\in L^2$ such that $\mu(\mathbb{R}^n)=4\pi$ and $\int_{\Sigma}|H|^2d\mu\le 16\pi(1+\delta^2)$ , we show that $\Sigma$ is $W^{2,2}$ close to the standard embedding of the round sphere in a quantitative way when $\delta< \delta_0\ll 1$. For $n=3$, we prove that the sharp constant is  $\delta_0^2=2\pi$.}
\vskip0.3cm

\noindent \textbf{Keywords.} Willmore energy, Varifold,Quantitative rigidity \vspace{0.5cm}

\noindent \textbf{MSC2020.} 49Q20, 35B65

%%      -------------------------------------------------------------------------------
%%      -------------------------- BEGIN ARTICLE ----------------------------
%%      -------------------------------------------------------------------------------
%% Authors, copy the body of your paper here

\tableofcontents
\section{Introduction}
Let $V=\underline{v}(\Sigma,\theta)$ be an integral $2$-varifold in $\mathbb{R}^n$. Its generalized mean curvature is defined by a vector valued function $H\in L^1_{loc}(d\mu)$ satisfying the first variation formula
\begin{align*}
\int_{\Sigma}div^{\Sigma}Xd\mu=-\int_{\Sigma}H\cdot X, \forall X\in C^1_c(\mathbb{R}^n),
\end{align*}
where $\mu=\theta\mathcal{H}^2\llcorner \Sigma$ is the volume measure and $div^\Sigma X(x)=tr_{T_x\Sigma}DX$ is the divergence of $X$ with respect to the approximating tangent space $T_x\Sigma$ of the rectifiable varifold. If $H\in L^2(d\mu)$, then the Willmore energy of the varifold is defined by 
\begin{align*}
\mathcal{W}(V)=\frac{1}{4}\int_{\Sigma}|H|^2d\mu.
\end{align*}  
The integrality of $V$ implies the generalized mean curvature is perpendicular to the approximating tangent space of the varifold\cite{B-1978} and hence there holds the monotonicity formula\cites{LS93, KS}, which further implies the Li-Yau inequality and the rigidity of Willmore minimizer in the class of integral varifolds with compact support\cite{LY-1982, LS-2014}. That is,  there always holds 
$$\mathcal{W}(V)\ge 4\pi,$$
and the equality holds if and only if $V=\underline{v}(\mathbb{S}^2,1)$ is a multiplicity one round sphere.  

A natural question is the quantitative rigidity estimate for almost Willmore minimizer. 

In the smooth setting, the question is equivalent to the quantitative rigidity estimate of nearly umbilical surfaces and the optimal rigidity estimates has been established by  De Lellis-M\"uller\cite{dLMu, dLMu2} and Lamm-Sch\"atzle\cite{LS-2014}.  More precisely, for a closed surface embedded in $\mathbb{R}^n$ with genus $g_\Sigma$, there holds 
\begin{align*}
4\pi\le \mathcal{W}(\Sigma)=\frac{1}{4}\int_{\Sigma}|H|^2d\mu=\frac{1}{2}\int_{\Sigma}|\mathring{A}|^2d\mu+4\pi(1-g_\Sigma),
\end{align*}
where $\mathring{A}=A-\frac{H}{2}\otimes g$ is the trace free part of the second fundamental form of $\Sigma$.  So, $\int_{\Sigma}| \mathring{A}|^2d\mu\le 2\delta^2$ implies  $\mathcal{W}(\Sigma)\le 4\pi+ \delta^2$.  On the other hand, since the large genus limit of the infimum of the Willmore energy is $8\pi$\cite{KLS}, by the existence of Willmore minimizer for oriented surfaces with a fixed genus\cite{LS93,BK},  there exists a gap $\delta_0=\delta_0(n)>0$  such that each oriented surface $\Sigma$ with $\mathcal{W}(\Sigma)\le 4\pi+\delta_0^2$  has genus $g_\Sigma=0$. As a conclusion, for an oriented surface $\Sigma$ and $\delta\le \delta_0$, there holds
\begin{align}\label{Willmore and tracefree part}
\mathcal{W}(\Sigma)\le 4\pi +\delta^2 \text{ if and only if } \int_{\Sigma}|\mathring{A}|^2d\mu\le 2\delta^2,
\end{align}
and $\Sigma$ is a topological sphere under either assumption.  

In \cite{dLMu}\cite{dLMu2}, De Lellis and M\"uller obtained  the optimal rigidity estimates for nearly umbilical surfaces in $\mathbb{R}^3$. In \cite{LS-2014}, Lamm and Sch\"atzle  apply the monotonicity formula to  studied this problem and extended the results to arbitrary codimension. We summarize their works \cite{dLMu, dLMu2,LS-2014} as the following. 
\begin{theorem}[$\textbf{De Lellis--M\"{u}ller(2005,2006); Lamm--Sch\"{a}tzle(2014)}$]\label{quantitative umbilical} Assume $\Sigma\subset \mathbb{R}^n$ is a smoothly embedded topological sphere and $\mathcal{H}^2(\Sigma)=4\pi$. If 
$\|\mathring{A}_{\Sigma}\|^2_{L^2(\Sigma)}<2e_n$, where,
\begin{equation*}
    e_n:=\left\{
\begin{aligned}
4\pi &\ \text{ for } n=3, \\
\frac{8\pi}{3} &\ \text{ for } n=4, \\
2\pi &\ \text{ for } n\ge 5,
\end{aligned}
\right.
\end{equation*}
is defined as in  \cite{Sch13},
 then there exists a conformal parametrization $f:\mathbb{S}^2\to \Sigma$ such that $f^*g_{\mathbb{R}^n}=e^{2u}g_{\mathbb{S}^2}$  and
\begin{align}\label{optimal rigidity estimate}
\|f-\text{id}_{\mathbb{S}^2}\|_{W^{2,2}(\mathbb{S}^2)}+\|u\|_{L^\infty(\mathbb{S}^2)}\le C(n,\tau)\|\mathring{A}_\Sigma\|_{L^2(\Sigma)},
\end{align}
where $\tau:=2e_n-\|\mathring{A}_\Sigma\|^2_{L^2(\Sigma)}>0$.
\end{theorem}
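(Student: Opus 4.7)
The plan is: (a) use uniformization to obtain a conformal parametrization $f:\mathbb{S}^2\to\Sigma$ with $f^*g_{\RR^n}=e^{2u}g_{\mathbb{S}^2}$; (b) normalize $f$ by the Möbius group of $\mathbb{S}^2$ together with ambient translations; (c) derive a Liouville-type PDE for the conformal factor $u$ from the Gauss equation; (d) prove a sharp $L^\infty$ bound on $u$ in terms of $\|\mathring A\|_{L^2}$ and the defect $\tau=2e_n-\|\mathring A\|_{L^2}^2$; (e) upgrade to the $W^{2,2}$ bound on $f-\text{id}_{\mathbb{S}^2}$ via the conformal immersion equation.

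For (b), I would use the six parameters of the conformal group of $\mathbb{S}^2$ together with ambient translations to impose moment conditions such as $\int_\Sigma f\,d\mu=0$ and $\int_{\mathbb{S}^2} x\,e^{2u}d\omega=0$. This normalization prevents Möbius degeneration of $f$ and is what makes the final constant depend only on $(n,\tau)$. For (c), the Gauss equation in arbitrary codimension gives $K_\Sigma=\tfrac14|H|^2-\tfrac12|\mathring A|^2$, and the conformal change-of-metric formula then yields
\begin{equation*}
-\Delta_{\mathbb{S}^2}u+1=K_\Sigma\, e^{2u},\qquad \int_{\mathbb{S}^2}e^{2u}\,d\omega=4\pi,
\end{equation*}
with both sides mean-zero by Gauss--Bonnet on the topological sphere.

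For (d) I would split $u=u_1+u_2$ with $-\Delta_{\mathbb{S}^2} u_2 = -\tfrac12|\mathring A|^2 e^{2u}+c_2$ and $-\Delta_{\mathbb{S}^2} u_1 = \tfrac14|H|^2 e^{2u}+c_1$, both of zero mean (here $c_1,c_2$ are the appropriate averaging constants). For $u_2$, a Moser--Trudinger/Onofri inequality tuned to the threshold $\|\mathring A\|_{L^2}^2<2e_n$ yields $\|u_2\|_\infty\le C(\tau)\|\mathring A\|_{L^2}$ after a bootstrap in the exponential. For $u_1$, the key observation is that the weight $|H|^2 e^{2u}$ inherits a Jacobian/Hardy-space structure from the conformal immersion relation $\Delta_{\mathbb{S}^2} f=e^{2u}H\circ f$; combining Wente's inequality with the comparison $\int|H|^2 d\mu-16\pi\le C\int|\mathring A|^2 d\mu$ (valid below the topological gap) gives $\|u_1\|_\infty\le C(\tau)\|\mathring A\|_{L^2}$. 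Step (e) is then routine: $u\in L^\infty$ and $H\in L^2(d\mu)$ together with the conformal immersion equation give $\|\Delta_{\mathbb{S}^2}f\|_{L^2(\mathbb{S}^2)}\le C(\tau)$; subtracting $\Delta_{\mathbb{S}^2}\text{id}_{\mathbb{S}^2}=-2\,\text{id}_{\mathbb{S}^2}$ and invoking the normalization of (b) to kill the $\ell=1$ spherical-harmonic kernel yields the desired $W^{2,2}$ estimate by standard $L^2$ elliptic regularity on $\mathbb{S}^2$.

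\textbf{Main obstacle.} The hard step is (d), specifically attaining the \emph{sharp} threshold $2e_n$. A naive Moser--Trudinger argument only yields a non-sharp gap; to reach the optimal $e_n$ one must combine the Hardy-space/Jacobian structure of the $|H|^2 e^{2u}$ term (to squeeze an extra factor of $4\pi$ out of Wente), the Möbius normalization of (b) (to prevent degeneration of the conformal gauge), and --- in codimension at least two --- the Li--Yau inequality and the monotonicity formula, which provide the extrinsic input responsible for the strict decrease of $e_n$ from $4\pi$ (for $n=3$) down to $2\pi$ (for $n\ge 5$). Once (d) is established with the sharp constant, the passage to (e) is routine elliptic regularity.
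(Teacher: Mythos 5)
The paper states Theorem~\ref{quantitative umbilical} as a citation of De Lellis--M\"uller and Lamm--Sch\"atzle (with the sharp thresholds $e_n$ coming from Sch\"atzle's \cite{Sch13}) and does not prove it, so I compare your sketch against the arguments in those references rather than against a proof inside the paper.

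Your outline has the right flavor --- conformal gauge, M\"obius normalization, Liouville equation, Hardy/Wente estimates, and elliptic regularity --- but step (d) as written does not close and it is precisely where the substance of the theorem lives. The decomposition $u=u_1+u_2$ is circular: both of your equations for $u_1$ and $u_2$ carry $e^{2u}=e^{2(u_1+u_2)}$ on the right, and the $u_1$-equation in particular has a source $\tfrac14|H|^2e^{2u}$ whose $L^1$ norm is $\approx 4\pi$, not small, so Wente only yields $\|u_1\|_\infty \le C$, not $\le C\|\mathring A\|_{L^2}$. To squeeze out the small bound you implicitly need $\tfrac14|H|^2e^{2u}-1$ to be small in a norm that controls $L^\infty$ --- but showing $e^{2u}$ is close to $1$ is exactly the quantity you are trying to estimate. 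The actual chain of reasoning in the literature breaks this loop by first proving, purely extrinsically and \emph{without} the conformal parametrization, the $L^2$-rigidity of the full second fundamental form $\|A - g\|_{L^2(\Sigma)}\le C\|\mathring A\|_{L^2(\Sigma)}$ (De Lellis--M\"uller 2005, via the Codazzi system and an approximation-by-constant argument). That step makes $|H|^2-4$ small in $L^1$, hence $K-1$ small in $L^1$, and only then does the Liouville equation with Hardy-space compensation give $\|u\|_\infty\le C\|\mathring A\|_{L^2}$ (De Lellis--M\"uller 2006). Your proposal skips this first step entirely.

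A second gap concerns the sharp constants $e_n$. A Brezis--Merle/Moser--Trudinger argument for the $\mathring A$-piece naturally lands at a fixed universal threshold; it does not by itself produce the strictly smaller values $e_4=8\pi/3$ and $e_n=2\pi$ for $n\ge 5$. You mention that monotonicity and Li--Yau should be responsible for the drop in codimension $\ge 2$, but that is a heuristic, not an argument. In \cite{Sch13} the sharp thresholds are obtained through a compactness/blow-up analysis identifying the limiting configurations (with the Clifford torus and Veronese-type surfaces entering in higher codimension); without sketching that classification, the claim of achieving $2e_n$ is not justified. Your steps (a), (b), (c), and (e) are fine in principle, and (b)'s moment normalization really is what kills the $\ell=1$ kernel in (e), but the theorem is not reduced to ``routine elliptic regularity'' until the $A$-rigidity and the sharp-threshold blow-up analysis are supplied.
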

By \eqref{Willmore and tracefree part}, this theorem implies the optimal rigidity for smooth surfaces which almost minimize the Willmore energy.

In this paper, we focus on the varifold setting. In this situation, the second fundamental form is not well-defined  a priori, so one cannot directly apply the work of De Lellis-M\"uller or Lamm-Sch\"{a}tzle. However, in the previous work \cite{BZ-2022b}, the authors observed that integral varifolds with critical Allard conditions are regular enough to have a well-defined second fundamental form. Actually, this kind of varifolds admit $W^{2,2}$ conformal parameterizations with bounded conformal factors. The precise statement is:

\begin{theorem}[\cite{BZ-2022b}]\label{2022b-main}
     Let $V=\underline{v}(\Sigma,\theta)$ be an integral $2$-varifold in $U\supset B_1$ with  corresponding Radon measure $\mu=\theta\mathcal{H}^2\llcorner \Sigma$. Assume  $\theta(x)\ge 1$ for $\mu$-a.e. $x\in U$  and $0\in\Sigma=spt\mu$. If $V$ admits generalized mean curvature $H\in L^2(d\mu)$ such that
     \begin{align}\label{density condition}
     \mu(B_1)\le (1+\varepsilon)\pi
     \end{align}
     and
     \begin{equation}\label{mean curvature bound}
          \left(\int_{B_1}|H|^2d\mu\right)^{\frac{1}{2}}\le \varepsilon,
     \end{equation}
     then there exists a bi-Lipschitz conformal parameterization $f: D_1\to f(D_1)\subset \Sigma$ satisfying
     \begin{enumerate}
      \item  $B(0,1-\psi)\cap\Sigma\subset f(D_1)$;
     \item For any $x,y\in D_1$,
     \begin{align*}
     (1-\psi)|x-y|\le |f(x)-f(y)|\le (1+\psi)|x-y|;
     \end{align*}
     \item Let $g=:df\otimes df$, then there exist $w\in W^{1,2}(D_1)\cap L^\infty(D_1)$ such that
             \[g=e^{2w}(dx\otimes dx+dy\otimes dy)\]
            and $$\|w\|_{L^\infty(D_1)}+\|\nabla w\|_{L^2(D_1)} + \|\nabla^2 w\|_{L^1(D_1)}\le \psi;$$
     \item $f\in W^{2,2}(D_1,\mathbb{R}^n)$ and
     \[\|f-\emph{i}_1\|_{W^{2,2}(D_1)}\leq \psi ,\]
   where $\emph{i}_1: D_1\rightarrow \mathbb{R}^n$ is a standard isometric embedding.

     \end{enumerate}
     Here $\psi=\psi(\varepsilon)$ is a positive function such that $\lim_{\varepsilon\to 0}\psi(\varepsilon)=0$.
    \end{theorem}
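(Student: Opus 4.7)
The plan is to proceed via Allard's regularity theory, followed by a conformal reparametrization argument in the spirit of M\"uller--\v{S}ver\'ak and H\'elein. Broadly, the small density and small mean curvature hypotheses should force $\Sigma \cap B_{1-\psi}$ to be a small $W^{2,2}$ graph over a $2$-plane; then uniformization together with sharp elliptic estimates on the resulting Liouville equation produce the desired conformal chart $f$ with all the stated bounds.

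First I would exploit the monotonicity formula to convert the mass bound $\mu(B_1)\le (1+\varepsilon)\pi$ together with $\|H\|_{L^2(\mu)}\le\varepsilon$ into quantitative control of the density ratio $\mu(B_r)/(\pi r^2)$ at every scale $r\le 1$, yielding $\theta=1$ $\mu$-a.e.\ on $\Sigma\cap B_{1-\psi}$ and upper density close to $1$ at every point in this ball. Allard's regularity theorem then applies at a uniform scale: after a rotation of coordinates, $\Sigma\cap B_{1-\psi}=\mathrm{graph}(u)$ for some $u:D_{1-\psi}\subset\mathbb{R}^2\to\mathbb{R}^{n-2}$ with $\|\nabla u\|_{C^0}=o(1)$. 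Writing the mean curvature of a small graph as $H=\Delta u+Q(\nabla u,\nabla^2 u)$, with $Q$ quadratic in $\nabla^2 u$ and vanishing at $\nabla u=0$, the hypothesis $\|H\|_{L^2}\le\varepsilon$ combined with the $C^0$-smallness of $\nabla u$ upgrades this to $\|D^2 u\|_{L^2(D_{1-\psi})}\le\psi(\varepsilon)$.

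Next, equip $D_{1-\psi}$ with the induced metric $g=\mathrm{Id}+du\otimes du$. Since both $\|K\|_{L^1}$ and $\int|A|^2\,d\mu$ are controlled by $\|D^2u\|_{L^2}^2$ and thus small, the uniformization theorem produces a conformal diffeomorphism $\varphi:D_1\to D_{1-\psi}$ with $\varphi^*g=e^{2w}(dx^2+dy^2)$. The conformal factor satisfies the Liouville equation $-\Delta w=Ke^{2w}$; since $\|K\|_{L^1}$ is small, Wente-type compensated compactness (equivalently, H\'elein's moving frames applied to $\Sigma$) yields $\|w\|_{L^\infty}+\|\nabla w\|_{L^2}+\|\nabla^2 w\|_{L^1}\le\psi(\varepsilon)$, establishing item (3). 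Setting $f=(\mathrm{id},u)\circ\varphi$ then gives the required $W^{2,2}$ conformal parametrization, and items (1), (2), and (4) follow by combining the $L^\infty$-smallness of $w$ with the $W^{2,2}$-smallness of $u$ and a Poincar\'e-type comparison of $f$ with a standard isometric embedding $i_1$.

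The main obstacle is the sharp $L^\infty$ bound on the conformal factor $w$ starting only from an $L^1$ bound on $K$: this is precisely the borderline case where a generic $L^1$ right-hand side in the Liouville equation gives only BMO (and hence logarithmic) control, so one must exploit the Jacobian/divergence structure of $K$ expressed in moving frames to extract an $L^\infty$ estimate via Wente's inequality. A secondary subtlety is choosing the uniformizing map $\varphi$ so that $\varphi(D_1)$ exhausts essentially all of $\Sigma\cap B_{1-\psi}$, as required in item (1); this demands a boundary analysis that matches $\varphi(\partial D_1)$ with an appropriate level curve of the graph chart, so that the image indeed contains $B(0,1-\psi)\cap\Sigma$.
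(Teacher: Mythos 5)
The theorem you are proving is not proved in this paper; it is cited as a black box from the authors' earlier preprint \cite{BZ-2022b}, so there is no in-paper proof to compare against. Nevertheless your proposed argument has a genuine gap that is worth flagging, because it concerns precisely the difficulty that reference is built to overcome.

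Your very first reduction invokes Allard's regularity theorem to conclude that $\Sigma\cap B_{1-\psi}$ is a $C^1$ graph with $\|\nabla u\|_{C^0}=o(1)$. Allard's theorem for an $m$-varifold requires $H\in L^p(\mu)$ with $p>m$; for $m=2$ that means $p>2$, and the conclusion is a $C^{1,\,1-m/p}$ graph. Here the hypothesis is the scale-invariant condition $\|H\|_{L^2(B_1)}\le\varepsilon$, which is exactly the borderline $p=m=2$ where the H\"older exponent degenerates to zero and the theorem fails to produce any graph (or even $C^0$-small tilt) conclusion. This is the content of the phrase \emph{critical Allard condition} in the title of \cite{BZ-2022b}: the theorem asserts bi-Lipschitz, not $C^{1,\alpha}$, regularity precisely because the classical Allard machinery cannot be applied. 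Once the graph representation is unavailable, the rest of your plan (expanding $H=\Delta u+Q$, uniformizing the induced metric on $D_{1-\psi}$, running the Liouville equation/Wente argument on the pulled-back Gauss curvature) has no place to start, since for an integral $2$-varifold with merely $H\in L^2(\mu)$ the second fundamental form and Gauss curvature are not even defined a priori. A second, milder issue is that Simon-style graphical decomposition, which is the natural fallback when Allard is unavailable, also presupposes smoothness and small $\int|A|^2$, again objects one does not yet have in the varifold setting. The honest route is to establish regularity (height/tilt excess decay, bi-Lipschitz parametrization, and the existence of the second fundamental form in $L^2$) directly from the monotonicity formula and the critical smallness hypotheses, which is the nontrivial work done in \cite{BZ-2022b}; only after that do the uniformization and Wente/compensated-compactness steps you describe become legitimate, and at that stage they match the expected strategy.
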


In view of Theorem \ref{2022b-main} and the work of De Lellis-M\"{u}ller and Lamm-Sch\"{a}tzle, even in the varifold setting, it is also reasonable to expect an optimal rigidity result with only Willmore energy involved in the statement. Here is the main result of this paper:

\begin{theorem}\label{main theorem} Let $V=\underline{v}(\Sigma,\theta)$ be an integral varifold with mass $\mu(\mathbb{R}^n)=4\pi$, $\theta\geq 1$ a.e. and the generalized mean curvature $H\in L^2(d\mu)$ such that 
$$\frac{1}{4}\int_{\mathbb{R}^n}|H|^2d\mu\le 4\pi+\delta^2.$$
Then there exists $\delta_0(n)>0$ such that for any $\delta<\delta_0$,  there exists a round sphere $\mathbb{S}^2$ with radius equal to one and centered at $0\in \mathbb{R}^n$ such that after translation, there exists a conformal parameterization $\Phi: \mathbb{S}^2\to \Sigma$ such that $d\Phi\otimes d\Phi=e^{2v}g_{\mathbb{S}^2}$ and 
\begin{align}\label {estimate}
\sup_{q\in \mathbb{S}^2}\|\Phi-id_{\mathbb{S}^2}\|_{W^{2,2}(\mathbb{S}^2)}+\|v\|_{C^0(\mathbb{S}^2)}\le C\delta. 
\end{align}
\end{theorem}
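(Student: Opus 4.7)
The plan is to argue by contradiction and compactness, reducing the statement to the smooth rigidity Theorem \ref{quantitative umbilical} via Theorem \ref{2022b-main} and the Gauss--Bonnet identity. Suppose the conclusion fails along a sequence $V_k = \underline{v}(\Sigma_k, \theta_k)$ with Willmore excess $\delta_k \to 0$ for which no conformal parameterization satisfying \eqref{estimate} exists. The Li--Yau inequality, together with $\mathcal{W}(V_k) < 8\pi$, forces $\theta_k = 1$ on $\spt \mu_k$, so each $V_k$ is a multiplicity-one varifold. Simon's monotonicity formula and the mass constraint $\mu_k(\mathbb{R}^n) = 4\pi$ give a uniform diameter bound after translating each $\Sigma_k$, so Allard's compactness produces a subsequential varifold limit $V_\infty$ with mass $4\pi$ and $\mathcal{W}(V_\infty) \leq 4\pi$ by lower semicontinuity. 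Li--Yau rigidity then forces $V_\infty$ to be a multiplicity-one round sphere of radius one; after one more translation we may assume it is centered at the origin. In particular $\spt \mu_k \to \mathbb{S}^2$ in Hausdorff distance.

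For $k$ large, the Hausdorff proximity to $\mathbb{S}^2$ implies that at every $p \in \spt \mu_k$ one can pick a scale $r_k(p)$ of order one at which $\mu_k(B_{r_k(p)}(p)) \leq (1+\varepsilon_k)\pi r_k(p)^2$ (by the monotonicity formula and Li--Yau) and $\int_{B_{r_k(p)}(p)} |H_k|^2 \, d\mu_k \leq \varepsilon_k$, with $\varepsilon_k \to 0$: concentration of the total $L^2$ mean curvature is forbidden because it would force, via the monotonicity formula, an increase of density ruled out by Li--Yau. Theorem \ref{2022b-main} then provides at every point a bi-Lipschitz conformal chart from the disk with small conformal factor and $W^{2,2}$ deviation from an isometric embedding. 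A finite subcover assembles into a $W^{2,2}$ Riemann surface structure on $\spt \mu_k$; being Hausdorff-close to $\mathbb{S}^2$ the surface is topologically a sphere, and uniformization yields a global conformal parameterization $\Phi_k : \mathbb{S}^2 \to \spt \mu_k$ lying in $W^{2,2}(\mathbb{S}^2, \mathbb{R}^n)$.

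Once $\spt \mu_k$ is realized as a $W^{2,2}$ conformally parameterized sphere, its second fundamental form $A_k$ is defined $\mu_k$-almost everywhere, the Gauss equation $|A_k|^2 = |H_k|^2 - 2 K_k$ holds pointwise, and Gauss--Bonnet gives $\int K_k \, d\mu_k = 4\pi$. Hence
\begin{align*}
\int_{\Sigma_k} |\mathring{A}_k|^2 \, d\mu_k = \tfrac{1}{2}\int_{\Sigma_k} |H_k|^2 \, d\mu_k - 2\int_{\Sigma_k} K_k \, d\mu_k \leq 2\delta_k^2.
\end{align*}
Theorem \ref{quantitative umbilical} --- whose proof only uses the conformal $W^{2,2}$ regularity now at our disposal, or which may instead be applied after smoothing $\Phi_k$ and passing to the limit --- then delivers a conformal parameterization satisfying $\|\Phi_k - \mathrm{id}_{\mathbb{S}^2}\|_{W^{2,2}} + \|v_k\|_{L^\infty} \leq C \|\mathring{A}_k\|_{L^2} \leq C \delta_k$, which contradicts the assumed failure and completes the argument.

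The main obstacle is the global-from-local step: verifying that at every point of $\spt \mu_k$ there is a uniformly controlled scale at which both the density and the $L^2$-smallness hypotheses of Theorem \ref{2022b-main} are satisfied, and then patching the resulting local conformal disks into a single globally defined $W^{2,2}$ conformal parameterization of a topological sphere. One must also rule out spurious components, higher-density points, and nontrivial topology that could a priori survive in the varifold setting; these are excluded by combining the Hausdorff convergence of $\spt \mu_k$ to $\mathbb{S}^2$, the multiplicity-one bound from Li--Yau, and the local diffeomorphism information from Theorem \ref{2022b-main}. Once the global $W^{2,2}$ conformal structure is in hand, the Gauss--Bonnet identity and the known smooth (or $W^{2,2}$) rigidity theorem finish the proof essentially mechanically.
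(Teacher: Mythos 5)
Your proposal takes a genuinely different route from the paper — compactness-and-contradiction, patching local charts from Theorem \ref{2022b-main}, and then invoking the smooth rigidity Theorem \ref{quantitative umbilical} — rather than the paper's inversion argument. Unfortunately there are two serious gaps that break the argument as written.

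The first and fatal gap is the scale-selection step. You claim that at every $p\in\spt\mu_k$ there is a scale $r_k(p)$ \emph{of order one} at which both hypotheses of Theorem \ref{2022b-main} hold with $\varepsilon_k\to 0$, in particular $\int_{B_{r_k(p)}(p)}|H_k|^2\,d\mu_k\leq\varepsilon_k$. But this is already false for the limit object. For the unit round sphere, $\int_{B_r(p)\cap\mathbb{S}^2}|H|^2\,d\mathcal{H}^2\approx 4\pi r^2$, which for $r$ bounded away from zero is bounded away from zero; since $\spt\mu_k\to\mathbb{S}^2$ in the varifold sense and no Willmore energy is lost, $\int_{B_r(p)}|H_k|^2\,d\mu_k$ converges to this quantity and cannot tend to zero at a fixed scale. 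The claim that the monotonicity formula plus Li--Yau forbids ``concentration'' of the $L^2$ mean curvature conflates two different things: density control says nothing about $\int_{B_r}|H|^2$ at scales of order one, and indeed the entire $16\pi$ of $\int|H|^2\,d\mu$ is ``spread out'' over $\Sigma$ rather than small at any macroscopic ball. To satisfy the mean-curvature hypothesis of Theorem \ref{2022b-main} one is forced to take $r_k\to 0$, at which point the function $\psi(\varepsilon)$ in the conclusion gives no rate tied to $\delta_k$, and the global patching of infinitely many shrinking charts into a single $W^{2,2}$ parameterization with a $C\delta_k$ estimate is nowhere justified. This is precisely what the inversion argument of Corollary \ref{Critical Allard condition for the inverted varifold} resolves: pushing forward $V$ under $x\mapsto (x-p)/|x-p|^2+p$ converts the Willmore excess $\delta^2$ into a \emph{globally} small $\int|\tilde H|^2\,d\tilde\mu\leq 16\pi\delta^2$, so Theorem \ref{2022b-main} applies on balls of every radius at once with the correct rate, and Theorem \ref{inverted quantitative rigidity} produces a single global conformal chart $f:\mathbb{R}^2\to\tilde\Sigma$ with estimates of size $C\delta^2$. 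Your proposal misses this reduction entirely.

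The second gap is the appeal to Theorem \ref{quantitative umbilical}, which as stated requires a \emph{smoothly} embedded sphere, whereas you have at best a $W^{2,2}$ conformal immersion. The parenthetical ``whose proof only uses the conformal $W^{2,2}$ regularity now at our disposal, or which may instead be applied after smoothing'' asks the reader to take on faith the most delicate part of the argument; this is essentially the content of Lamm--Sch\"atzle's extension and cannot be assumed. Note also that the contradiction/compactness framing by itself cannot produce a rate: it can only show that the left-hand side of \eqref{estimate} tends to zero along your sequence, not that it is $\leq C\delta_k$. You recover the rate only by the reduction to Theorem \ref{quantitative umbilical}, so the gap there is structurally essential, not cosmetic. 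The paper avoids both problems: after inverting back, it derives the $W^{2,2}$ estimate directly from the conformal mean curvature equation $\Delta\Phi=H\circ\Phi\,e^{2v}$, the bound on $\int_\Sigma|H+2p|^2\,d\mu$, and a topological-degree argument showing the radial projection $\Sigma\to\mathbb{S}^2$ is surjective.
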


  It is also a natural question to ask what is the sharp value of $\delta_0^2(n)$ in the varifold setting. The example of double bubble\cite{HMRR02} shows $\delta_0^2(n)\le 2\pi$. In the case $n=3$, we can show $\delta_0^2(3)=2\pi$ (see Corollary \ref{corollary}). In cases $n\ge 4$, the question is more complicated. We can show $\delta_0^2(n)<2\pi$ implies $V$ is regular (See Theorem \ref{sharp constant}). But even for a smooth $V$, this question is related to Kusner's conjecture.

\begin{conj}\cite{K96}
A smooth closed surface in $\mathbb{R}^4$ with Willmore energy smaller than $6\pi$ has to be a sphere.
\end{conj}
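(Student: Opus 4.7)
The plan is to follow the standard Willmore-gap-theorem template in three stages: (i) upgrade $\mathcal{W}<8\pi$ to embeddedness via Li--Yau, (ii) use the asymptotic $\beta_g^n\to 8\pi$ of Kuwert--Li--Sch\"atzle to reduce to finitely many genera, and (iii) prove the critical torus lower bound $\beta_1^4\ge 6\pi$ via minimal-surface theory in $\mathbb{S}^4$. I stress at the outset that the statement is a celebrated open conjecture and what follows is a roadmap rather than a completed proof; stage (iii) is a substantial problem in its own right.

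First I would apply the Li--Yau inequality $\mathcal{W}(\Sigma)\ge 4\pi\,\theta(x)$ for every $x\in\spt\mu$: since $6\pi<8\pi$, any hypothetical counterexample $\Sigma$ must be embedded with multiplicity one. If $\Sigma$ is also not a sphere it has genus $g\ge 1$, and the conjecture reduces to showing
\[
\beta_g^4:=\inf\bigl\{\mathcal{W}(\Sigma):\Sigma\subset\mathbb{R}^4\text{ smooth closed embedded of genus }g\bigr\}\ge 6\pi
\]
for every $g\ge 1$. For large $g$ this is automatic from the Kuwert--Li--Sch\"atzle asymptotic $\beta_g^n\to 8\pi$ as $g\to\infty$, leaving only finitely many genera to handle. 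For each of them, by the Bauer--Kuwert existence theorem $\beta_g^4$ is attained by a smooth Willmore surface $\Sigma_g$; for $g\ge 2$ I would attempt to relate $\beta_g^4$ to $\beta_1^4$ via a neck-pinch / handle-cutting surgery in the spirit of Kuwert--Sch\"atzle's bubble analysis, where pinching off a single handle costs at most one spherical bubble of energy $4\pi$, thereby reducing everything to the torus case.

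The torus case is the heart of the matter. By conformal invariance of $\mathcal{W}$ I would transplant a $\beta_1^4$-minimizer into $\mathbb{S}^4$; the extremal is then expected to be a minimal torus, for which $\mathcal{W}(\Sigma)$ equals its intrinsic area in $\mathbb{S}^4$. The conjecture thus reduces to showing that every embedded minimal torus in $\mathbb{S}^4$ has area at least $6\pi$. All currently known examples --- the Clifford torus in $\mathbb{S}^3\subset\mathbb{S}^4$ with area $2\pi^2$, and the flat equilateral tori produced by the twistor fibration $\mathbb{C}\mathbb{P}^3\to\mathbb{S}^4$ from holomorphic tori --- satisfy this bound, and the plan is to exclude smaller examples by combining Ejiri's and Montiel's classification of superminimal tori in $\mathbb{S}^4$ with Morse-index or conformal-volume lower bounds of Li--Yau type.

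The main obstacle is stage (iii). Marques--Neves proved the analogous statement in $\mathbb{S}^3$ via an Almgren--Pitts min-max based on the canonical $5$-parameter family of round metrics, and no comparably canonical sweepout of $\mathbb{S}^4$ by tori is available; moreover, the moduli space of embedded minimal tori in $\mathbb{S}^4$ is far richer than in $\mathbb{S}^3$, so a direct classification seems out of reach. A softer route would be a codimension-reduction theorem forcing the $\beta_1^4$-minimizer to lie inside a totally geodesic $\mathbb{S}^3\subset\mathbb{S}^4$, after which the Willmore theorem of Marques--Neves would immediately give $\mathcal{W}\ge 2\pi^2>6\pi$; but no such codimension reduction is currently available, and obtaining one would seemingly require genuinely new input from twistor or quaternionic geometry.
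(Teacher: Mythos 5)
The statement you were asked to prove is \emph{Kusner's conjecture}, which the paper states as an open problem and does not attempt to prove; there is no paper proof to compare against, and you correctly recognize this at the outset. Your roadmap is reasonable in broad outline, but let me flag the specific place where the strategy as written would break down, beyond the acknowledged difficulty of stage (iii).

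The handle-cutting reduction in stage (ii) goes in the wrong direction. The Bauer--Kuwert gluing inequality gives an \emph{upper} bound of the form $\beta_g^n \le \beta_{g_1}^n + \cdots + \beta_{g_k}^n - 4(k-1)\pi$ for any partition $g_1+\cdots+g_k = g$; it does \emph{not} give $\beta_g^n \ge \beta_1^n$, and pinching a handle off a genus-$g$ minimizer produces a competitor of lower genus whose energy could a priori be anything. The Kuwert--Sch\"atzle bubble analysis you cite is used to show that minimizing sequences do not degenerate (so that $\beta_g^n$ is attained), not to transfer lower bounds across genera. So even granting a proof of $\beta_1^4\ge 6\pi$, the inference to $\beta_g^4\ge 6\pi$ for $2\le g\le g_0$ would require a separate argument for each such $g$.

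Two smaller issues in stage (iii): it is not automatic that a $\beta_1^4$-minimizer, transported to $\mathbb{S}^4$ by stereographic projection, is a \emph{minimal} torus rather than merely a Willmore torus; minimality of the Clifford torus is a special feature of the $\mathbb{S}^3$ picture confirmed a posteriori by Marques--Neves, not a general principle. And the conformal-volume and superminimal-tori classification tools you invoke are not currently known to yield a $6\pi$ lower bound; this is precisely why the conjecture is open. Your concluding paragraph diagnoses this honestly, and the codimension-reduction route you sketch (forcing the minimizer into a totally geodesic $\mathbb{S}^3$, then invoking Marques--Neves to get $2\pi^2>6\pi$) is the cleanest conditional statement in the write-up, but as you say no such reduction is available.
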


   Next, let us introduce our strategies  and the structure of this paper. In Section \ref{sec:Pre} , we prepare some preliminaries for varifolds and introduce the inversion formula and  apply the inversion formula to relate the varifold almost minimizing the Willmore energy to a new varifold satisfying the critical Allard condition \eqref{density condition} and \eqref{mean curvature bound} on the whole $\mathbb{R}^n$. In Section \ref{sec:Quantitative plane}, we  apply Theorem \ref{2022b-main} to  get the regularity of  varifolds in $\mathbb{R}^n$ satisfying the critical Allard condition and obtain its optimal quantitative rigidity estimate(See Theorem \ref{inverted quantitative rigidity}). 
   In Section \ref{sec:Quantitative sphere}.
    we invert the quantitative rigidity of the plane back and get the quantitative rigidity of varifolds almost minimizing the Willmore energy. 
   In the last Section \ref{sec:sharp constant}, we discuss the sharp constants.

\section{Preliminaries}\label{sec:Pre}

In this paper, we always consider rectifiable $2$-varifold $V=\underline{v}(\Sigma, \theta)$ in $\mathbb{R}^n$ with $\theta\ge 1$. For any such varifold,  put $\mu=\mu_V=\theta\mathcal{H}^2\llcorner \Sigma$.  We also assume the generalized mean curvature $H\in L^2(d\mu)$. We say the generalized mean curvature is perpendicular if $H(x)\bot T_x\Sigma$ for $\mu$-a.e. $x\in\Sigma$, where $T_x\Sigma$ is the approximating tangent space of the rectifiable set $\Sigma$.  For example, the generalized mean curvature of an integral varifold is perpendicular\cite{B-1978}

%Assume 
%\begin{align}\label{Willmore condition}
%\int_{\Sigma}|H|^2d\mu\le 16\pi(1+\delta^2)
%\end{align}
%and 
%\begin{align}\label{mass condition}
%\mu(\mathbb{R}^n)=4\pi.
%\end{align}
\begin{lemma}\label{Willmore lower bound}
Assume $V=\underline{v}(\Sigma,\theta)$ is a rectifiable $2$-varifold in $\mathbb{R}^n$ with perpendicular generalized mean curvature such that $\theta\ge 1$, $\mu(\mathbb{R}^n)<+\infty$ and $\int_{\Sigma}|H|^2d\mu<+\infty$. Then, by the monotonicity formula\cite{LS93, KS}, we know 
\begin{align}\label{dominate remainder term}
\Theta(\mu,x)=\lim_{r\to 0}\frac{\mu(B(x,r))}{\pi r^2} \text{ exists  for every } x\in \mathbb{R}^n,
\end{align}
and 
\begin{align*}
\Theta(\mu,x)= \frac{1}{16\pi}\int_{\mathbb{R}^n}|H|^2d\mu-\frac{1}{\pi}\int_{\mathbb{R}^n}|\frac{H}{4}+\frac{\nabla^{\bot}r}{r}|^2d\mu.
\end{align*}
Since $\Theta(\mu,x)=\theta(x)\ge 1$ for $\mu$-a.e. $x\in \Sigma$, there holds
\begin{align*}
\int_{\Sigma}|H|^2d\mu\ge 16\pi\Theta(\mu, x)\ge 16\pi.
\end{align*}
%So, the condition \eqref{Willmore condition} is means the Willmore energy of the varifold $\mu$ is almost minimizing. 
\end{lemma}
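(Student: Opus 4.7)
The plan is to deduce everything directly from Simon's monotonicity identity for rectifiable $2$-varifolds with perpendicular $L^2$ mean curvature, then complete the square. In its annular form the monotonicity formula gives, for every $x \in \mathbb{R}^n$ and $0<s<R<\infty$,
\[
\frac{\mu(B(x,R))}{\pi R^2} - \frac{\mu(B(x,s))}{\pi s^2} = \frac{1}{\pi}\int_{B(x,R)\setminus B(x,s)}\left|\frac{\nabla^{\bot}r}{r}\right|^2 d\mu + \frac{1}{2\pi}\int_{B(x,R)\setminus B(x,s)}\frac{H\cdot(y-x)}{r^2}\,d\mu + \mathrm{bdry}(s,R),
\]
where $r=|y-x|$ and $\mathrm{bdry}(s,R)$ collects the radial-flux terms at the two radii. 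Monotonicity of a suitable correction immediately yields existence of $\Theta(\mu,x):=\lim_{s\to 0}\mu(B(x,s))/(\pi s^2)$ at every $x\in\mathbb{R}^n$, settling \eqref{dominate remainder term}.

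Next I would let $R\to\infty$. The bulk term $\mu(B(x,R))/(\pi R^2)\to 0$ since $\mu(\mathbb{R}^n)<\infty$, and the outer boundary flux vanishes along a carefully chosen sequence $R_k\to\infty$ by averaging the monotonicity identity over a dyadic annulus $[R,2R]$ and applying Cauchy--Schwarz together with $H\in L^2(d\mu)$. Sending $s\to 0$ then isolates $\Theta(\mu,x)$ on the left. Using perpendicularity of $H$ to rewrite $H\cdot(y-x)/r^2 = H\cdot\nabla^{\bot}r/r$ and expanding
\[
\left|\frac{H}{4}+\frac{\nabla^{\bot}r}{r}\right|^2 = \frac{|H|^2}{16} + \frac{H\cdot\nabla^{\bot}r}{2r} + \left|\frac{\nabla^{\bot}r}{r}\right|^2,
\]
the two bulk integrals regroup exactly into $\frac{1}{16\pi}\int|H|^2 d\mu - \frac{1}{\pi}\int|H/4+\nabla^{\bot}r/r|^2 d\mu$, which is the stated identity for $\Theta(\mu,x)$.

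Finally, rectifiability gives $\Theta(\mu,x)=\theta(x)\geq 1$ at $\mu$-almost every $x\in\Sigma$; fixing such a point and discarding the non-positive term $-\frac{1}{\pi}\int|H/4+\nabla^{\bot}r/r|^2\,d\mu$ in the identity yields $\frac{1}{16\pi}\int_\Sigma|H|^2\,d\mu \geq \Theta(\mu,x)\geq 1$, i.e., $\int_\Sigma|H|^2\,d\mu\geq 16\pi$. The only delicate point in the outline is the $R\to\infty$ step: the vanishing of the outer flux genuinely needs both $\mu(\mathbb{R}^n)<\infty$ and $H\in L^2(d\mu)$, and must be carried out via a dyadic-averaging choice of radii rather than a pointwise limit. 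All remaining manipulations are purely algebraic.
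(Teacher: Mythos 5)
Your argument is correct and follows the same route the paper intends: invoke the Simon/Kuwert--Sch\"atzle monotonicity identity, let the outer radius tend to infinity and the inner radius to zero, and complete the square to identify $\Theta(\mu,x)$ with $\frac{1}{16\pi}\int|H|^2d\mu - \frac{1}{\pi}\int|\frac{H}{4}+\frac{\nabla^\perp r}{r}|^2d\mu$, then use $\theta\ge 1$ $\mu$-a.e. The paper does not spell the proof out, treating it as a direct consequence of \cite{LS93,KS}, and your reconstruction matches. One small simplification: if you use the Kuwert--Sch\"atzle form of the monotone quantity
\begin{align*}
\gamma(r)=\frac{\mu(B_r(x))}{\pi r^2}+\frac{1}{16\pi}\int_{B_r(x)}|H|^2d\mu+\frac{1}{2\pi r^2}\int_{B_r(x)}\langle y-x,H\rangle\,d\mu,
\end{align*}
the ``flux'' term is a bulk integral bounded by $\frac{1}{r}\|H\|_{L^2(d\mu)}\mu(\mathbb{R}^n)^{1/2}$, so it vanishes as $r\to\infty$ by a direct Cauchy--Schwarz estimate, with no need for the dyadic-averaging selection of radii; that device is only required in variants of the identity with genuine $\partial B_r$ surface integrals.
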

\begin{lemma}\label{compactness}
Assume $V=\underline{v}(\Sigma,\theta)$ is a rectifiable $2$-varifold in $\mathbb{R}^n$ such that $\theta\ge 1$, $H\in L^2(d\mu)$ and 
\begin{align*}
\mu(\mathbb{R}^n)<+\infty \text{ and } \int_{\mathbb{R}^n}|H|^2d\mu<32\pi.
\end{align*}
Then $\Sigma$ is a compact set with  
$$\frac{1}{7}\sqrt{\frac{\mu(\mathbb{R}^n)}{4\pi}}\le \text{diam}(\Sigma)<C\sqrt{\mu(\mathbb{R}^n)},$$
where $C$ is a universal constant. 
\end{lemma}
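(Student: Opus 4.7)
The plan is to read off both diameter bounds from the first variation formula and Simon's diameter estimate, using the energy gap $\int_{\mathbb{R}^n}|H|^2 d\mu<32\pi=2\cdot 16\pi$ together with the Willmore lower bound $\ge 16\pi$ of Lemma~\ref{Willmore lower bound} to control the overall geometry.

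For the lower bound on $\text{diam}(\Sigma)$, I would fix any $x_0\in\Sigma$ and test the first variation formula against the radial field $X(y)=y-x_0$, truncated by a smooth cutoff $\phi_R(|y-x_0|)$ supported in $B_{2R}(x_0)$ and equal to $1$ on $B_R(x_0)$. Since $\text{div}^\Sigma(y-x_0)=2$ and $\mu(\mathbb{R}^n)<\infty$ forces $\mu(\{R\le|y-x_0|\le 2R\})\to 0$ as $R\to\infty$, the cutoff error vanishes in the limit and we obtain
\[2\mu(\mathbb{R}^n)=-\int_\Sigma H\cdot(y-x_0)\,d\mu.\]
Either $D:=\text{diam}(\Sigma)=\infty$, in which case the lower bound is trivial, or $|y-x_0|\le D$ on $\Sigma$, whence Cauchy--Schwarz yields
\[2\mu(\mathbb{R}^n)\le D\Big(\int|H|^2 d\mu\Big)^{1/2}\mu(\mathbb{R}^n)^{1/2}<D\sqrt{32\pi}\,\mu(\mathbb{R}^n)^{1/2},\]
and rearranging gives $D\ge \tfrac{1}{\sqrt{2}}\sqrt{\mu(\mathbb{R}^n)/(4\pi)}$, which is comfortably stronger than the claimed $\tfrac{1}{7}\sqrt{\mu(\mathbb{R}^n)/(4\pi)}$.

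For the upper bound I would first argue that $\Sigma$ is essentially connected: if $\Sigma=\Sigma_1\sqcup\Sigma_2$ with $\dist(\Sigma_1,\Sigma_2)>0$ and $\mu(\Sigma_i)>0$, then each restricted varifold $V_i:=V\llcorner\Sigma_i$ is again a rectifiable $2$-varifold with $\theta\ge 1$ and perpendicular generalized mean curvature in $L^2$, and Lemma~\ref{Willmore lower bound} gives $\int_{\Sigma_i}|H|^2 d\mu\ge 16\pi$ for each $i$, summing to at least $32\pi$ and contradicting the hypothesis. Granted this essential connectedness, I would invoke Simon's diameter estimate for closed $2$-varifolds with $H\in L^2$,
\[\text{diam}(\Sigma)\le C\int_\Sigma|H|\,d\mu,\]
and combine it with Cauchy--Schwarz and $\int|H|^2 d\mu<32\pi$ to obtain $\text{diam}(\Sigma)<C\sqrt{32\pi}\,\mu(\mathbb{R}^n)^{1/2}$.

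Compactness then follows immediately: $\Sigma=\spt\mu$ is closed by the very definition of the support of a Radon measure, and the upper bound just proved shows it is bounded. The main delicate step is the essential-connectedness argument, where one has to be careful that a decomposition of $\Sigma$ into topological pieces corresponds to a decomposition of the varifold itself with well-defined restrictions of $H$; the first-variation bound for the lower diameter estimate, by contrast, requires nothing beyond $H\in L^2(d\mu)$ and $\mu(\mathbb{R}^n)<\infty$.
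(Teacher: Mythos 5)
Your lower bound argument is clean and in fact gives a stronger constant ($1/\sqrt 2$ instead of $1/7$) via a genuinely different route: you test the first variation directly with the radial field $y-x_0$ (with a cutoff to handle $C^1_c$), while the paper extracts the same information from the monotonicity formula at a point of maximal separation. Both are correct, and yours is the more elegant. Your essential-connectedness observation is also the same key idea the paper uses: a splitting of $\Sigma$ into two positive-distance, positive-measure pieces would force at least $16\pi + 16\pi = 32\pi$ of Willmore energy by Lemma \ref{Willmore lower bound}.

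The gap is in the upper bound. You cite ``Simon's diameter estimate for closed $2$-varifolds, $\text{diam}(\Sigma)\le C\int|H|\,d\mu$,'' but two things go wrong. First, this linear-in-$|H|$ form is Topping's estimate, not Simon's; what Simon proves in \cite{LS93} is the weaker bound $\text{diam}(\Sigma)\le C\sqrt{\mu(\Sigma)\int|H|^2\,d\mu}$ obtained by optimizing over a scale $\rho$, and a version of the Topping form for general rectifiable $2$-varifolds with perpendicular $L^2$ mean curvature is not available as a black box. Second, and more seriously, your logic is circular: such diameter estimates are established for \emph{compact} connected objects, yet boundedness of $\Sigma$ is precisely what you are trying to deduce from the estimate. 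You write that compactness then follows ``immediately'' from closedness plus the upper bound, but the upper bound presupposed compactness. The paper avoids this circle entirely. It assumes $\text{diam}(\Sigma)=\infty$, fixes $x\in\Sigma$, $\rho>0$, and for each $j\le N$ produces a point $y_j\in\Sigma\cap\partial B(x,(j+\tfrac12)\rho)$ --- the existence of such $y_j$ is exactly where the $<32\pi$ bound and the splitting argument enter, because if the sphere missed $\Sigma$ the varifold would disconnect. The balls $B(y_j,\rho/4)$ are disjoint, and the monotonicity formula gives $\pi\le\Theta(\mu,y_j)\le C\bigl(\mu(B(y_j,\rho/4))/\rho^2+\int_{B(y_j,\rho/4)}|H|^2\bigr)$, so summing yields $N\pi\le C\bigl(\mu(\mathbb{R}^n)/\rho^2+\int|H|^2\bigr)$, which fails for $N$ large. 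This gives boundedness directly, and running the same count with $N=[d/\rho]$ and optimizing $\rho$ gives the quantitative upper bound without ever invoking an external compact-surface theorem. To repair your proposal you would need to either reproduce this ball-counting argument or prove a Topping-type estimate for possibly unbounded, essentially connected rectifiable $2$-varifolds from scratch; citing it is not enough.
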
 
\begin{proof} The proof is almost the same as  Leon Simon \cite{LS93} did in the smooth case, except for the issue of applying the condition $\int_{\Sigma}|H|^2d\mu<32\pi$ to guarantee some kind of connectivity of $\Sigma=\text{spt}\mu$ in the varifold case.

For any $x_i\in \Sigma$ such that $x_i\to x$, by $\theta\ge 1$ we know that $\Theta(\mu,x_i)\ge 1$. So, the upper semi-continuity from the monotonicity formula implies
$$\Theta(\mu,x)\ge \limsup_{i\to \infty}\Theta(\mu,x_i)\ge 1.$$
This means $x\in spt\mu=\Sigma$ and hence $\Sigma$ is a closed subset in $\mathbb{R}^n$.

Now, we prove $\Sigma$ is compact by showing  $\text{diam}(\Sigma)<+\infty$. Otherwise, we have $\text{diam}(\Sigma)=\infty$ and hence for fixed $x\in \Sigma$, $\rho>0$ and any $N\in \mathbb{N}_+$ and $1\le j\le N$, $\Sigma\backslash B(x,(j+\frac{1}{2})\rho)\neq\emptyset$.   We claim that there exists $y_j\in \Sigma\cap \partial B(x,(j+\frac{1}{2})\rho)$. Otherwise, letting $U=\breve{B}(x,(j+\frac{1}{2})\rho)$ and $V=\mathbb{R}^n\backslash B(x,(j+\frac{1}{2})\rho)$, then $U\cap V=\emptyset$ and $\Sigma\subset U\cup V$. Since $\Sigma$ is a closed set, we know 
\begin{align*}
\min\{d(U\cap \Sigma, \partial U), d(V\cap \Sigma, \partial V)\}>0.
\end{align*}
This means both $V_1=\underline{v}(\Sigma\cap U, \theta|_U)$ and $V_2=\underline{v}(\Sigma\cap V, \theta|_V)$ can be regarded as varifold in $\mathbb{R}^n$ with 
\begin{align*}
\mu_{V_i}(\mathbb{R}^n)+\int_{\mathbb{R}^n}|H|^2d\mu_{V_i}<+\infty, \quad \quad i=1,2.
\end{align*}
So, by Lemma \eqref{Willmore lower bound}, we know that 
\begin{align*}
\int_{\Sigma}|H|^2d\mu\ge \int_{\Sigma\cap U}|H|^2d\mu_{V_1}+\int_{\Sigma\cap V}|H|^2d\mu_{V_2}\ge 32 \pi, 
\end{align*}
which contradicts to the condition $\int_{\mathbb{R}^n}|H|^2d\mu< 32\pi$. We continue the proof of $\text{diam}(\Sigma)<+\infty$. By the monotonicity formula, we also have 
\begin{align*}
\Theta(\mu,y_j)\le C(\frac{\mu(B(y_j,\frac{\rho}{4}))}{\rho^2}+\int_{B(y_j,\frac{\rho}{4})}|H|^2d\mu).
\end{align*}
Since $y_j\in \partial B(x,(j+\frac{1}{2})\rho)$, we know that $B(y_j,\frac{\rho}{4})\cap B(y_k,\frac{\rho}{4})=\emptyset, \forall j\neq k$, and hence 
\begin{align}\label{upper bound of the diameter}
N\pi \le \sum_{j=1}^N\Theta(\mu,y_j)\le C(\frac{\mu(\mathbb{R}^n)}{\rho^2}+\int_{\mathbb{R}^n}|H|^2d\mu).
\end{align}
Letting $N\to \infty$, we get a contradiction. This means $\text{diam}(\Sigma)<+\infty$ and hence $\Sigma$ is compact.
So, we can assume $d:=\text{diam}(\Sigma)=|x-y|$ for some $x,y\in \Sigma$. Taking $\rho\in (0,\frac{d}{100}]$ and $N=[\frac{d}{\rho}]$, the same argument as above showing that  \eqref{upper bound of the diameter} still holds and hence 
\begin{align*}
\frac{d}{\rho}\le 2N\le C(\frac{\mu(\mathbb{R}^n)}{\rho^2}+\int_{\mathbb{R}^n}|H|^2d\mu).
\end{align*}
So, we get 
\begin{align*}
d\le  C(\frac{\mu(\mathbb{R}^n)}{\rho}+\rho\int_{\Sigma}|H|^2d\mu).
\end{align*}
Letting $\rho_0=\sqrt{\frac{\mu(\mathbb{R}^n)}{\int_{\Sigma}|H|^2d\mu}}$, we discuss by cases. If $\rho_0\le \frac{d}{100}$, then we know 
\begin{align*}
d\le C(\frac{\mu(\mathbb{R}^n)}{\rho_0}+\rho_0\int_{\Sigma}|H|^2d\mu)=C\sqrt{\int_{\Sigma}|H|^2d\mu \cdot \mu(\mathbb{R}^n)}\le C\sqrt{32\pi\cdot\mu(\mathbb{R}^n)}.
\end{align*}
If $\rho_0>\frac{d}{100}$, then we know 
\begin{align*}
d\le 100\rho_0=100\sqrt{\frac{\mu(\mathbb{R}^n)}{\int_{\Sigma}|H|^2d\mu}}\le 100\sqrt{\frac{\mu(\mathbb{R}^n)}{16\pi}}\le C\sqrt{\mu(\mathbb{R}^n)}.
\end{align*}
This complete the estimate of the upper bound of the diameter. For the lower bound, the monotonicity formula implies 
\begin{align*}
\frac{\mu(B_\sigma(x))}{\sigma^2}\le \frac{1}{16}\int_{\mathbb{R}^n}|H|^2d\mu+\frac{2}{\sigma}\int_{\mathbb{R}^n}|H|.
\end{align*}
By letting $\sigma=d=\text{diam}(\Sigma)$, we know 
\begin{align*}
\frac{\mu(\mathbb{R}^n)}{d^2}\le \frac{1}{16}\int_{\mathbb{R}^n}|H|^2d\mu+\frac{1}{\varepsilon}\int_{\mathbb{R}^n}|H|^2d\mu+\varepsilon\frac{\mu(\mathbb{R}^n)}{d^2},
\end{align*}
which implies (by taking $\varepsilon=\frac{1}{2}$)
\begin{align*}
\frac{\mu(\mathbb{R}^n)}{d^2}\le 5\int_{\mathbb{R}^{n}}|H|^2d\mu\le 160\pi.
\end{align*}
As a result, we know 
\begin{align*}
\frac{1}{7}\sqrt{\frac{\mu(\mathbb{R}^n)}{4\pi}}\le d.
\end{align*}
\end{proof}

 For a rectifiable $2$-varifold $V=\underline{v}(\Sigma,\theta)$  in $\mathbb{R}^n$ with perpendicular generalized mean curvature $H\in L^2(d\mu)$ and $\theta\ge 1$, Let $p\in \Sigma$ and $f:\mathbb{R}^n\backslash\{p\}\to \mathbb{R}^n\backslash \{p\}$ be the inversion map 
\begin{align*}
f(x):=f_p(x):=\frac{x-p}{|x-p|^2}+p.
\end{align*}
Noting $f$ is one-to one and proper, we know $\tilde{V}=f_{\sharp}V=\underline{v}(\tilde{\Sigma},\tilde{\theta})$ is a rectifiable  $2$-varifold in $\mathbb{R}^n$, where $\tilde{\Sigma}=f(\Sigma)$ and $\tilde{\theta}\circ f=\theta$, i.e., for $y=f(x)$,$\tilde{\theta}(y)=\theta(x)$. Moreover, by Lemma \eqref{compactness}, we know $f(\Sigma)$ is a closed set such that $p\notin f(\Sigma)$.

Next, we calculate the first variation and the mass density identity of the inverted varifold. The same formula in the smooth case can be found in \cite{Z22}. 
\begin{lemma}\label{inverted lemma}Assume $V=\underline{v}(\Sigma,\theta)$ is a rectifiable $2$-varifold in $\mathbb{R}^n$ with perpendicular generalized mean curvature $H\in L^2(d\mu)$, $\theta\ge 1$ and $\mu(\mathbb{R}^n)<+\infty$. Then, $\tilde{V}$ is a rectifiable varifold with perpendicular generalized mean curvature and  for any $\tilde{X}\in C_c^1(\mathbb{R}^n)$, there holds
\begin{align*}
\delta \tilde{V}(\tilde{X})=\int_{\tilde{\Sigma}}div^{\tilde{\Sigma}}\tilde{X} d\tilde{\mu}=-\int_{\tilde{\Sigma}}\tilde{H}\cdot \tilde{X}d\tilde{\mu},
\end{align*}
where $\tilde{\mu}=\tilde{\theta}(y)\mathcal{H}^2\llcorner \tilde{\Sigma}$, 
\begin{align}\label{inverted mean curvature}
\tilde{H}(f(x))=|x|^2R_x(H+4\frac{x^{\bot}}{|x|^2}),
\end{align}
and $R_x(v)=v-2\frac{1}{|x|^2}\langle v,x\rangle x$ is the reflection with respect to the direction $x$. Moreover, we have 
 \begin{align*}
 |\tilde{H}|^2d\tilde{\mu}(y)=|H+4\frac{x^\bot}{|x|^2}|^2d\mu(x),\quad 
 \text{  }\quad  \quad 
 \frac{|H|^2}{16}d\mu(x)=|\frac{\tilde{H}}{4}+\frac{y^{\bot}}{|y|^2}|^2d\tilde{\mu}(y)
 \end{align*}
  and 
  \begin{align*}
 \Theta(\tilde{\mu},\infty):=\lim_{r\to \infty}\frac{\tilde{\mu}(B(p,\rho))}{\pi \rho^2}=\Theta(\mu,p).
  \end{align*}
\end{lemma}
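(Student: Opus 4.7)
Without loss of generality translate so $p=0$; then $f(x) = x/|x|^2$ has $Df(x) = |x|^{-2}R_x$, with $R_x(v) = v - 2\langle v,x\rangle x/|x|^2$ the reflection in the direction of $x$. Since $R_x\in O(n)$, $f$ is a smooth conformal diffeomorphism of $\mathbb R^n\setminus\{0\}$ with conformal factor $|x|^{-2}$. By Lemma \ref{compactness}, $\Sigma$ is compact with $0\in\Sigma$, so $\tilde\Sigma = f(\Sigma\setminus\{0\})$ stays uniformly away from $0$. The pushforward is $\mathcal H^2$-rectifiable with $T_{f(x)}\tilde\Sigma = R_x(T_x\Sigma)$, and the tangential $2$-Jacobian of $f$ is $|x|^{-4}$, so the area formula yields $d\tilde\mu(y) = |x|^{-4}d\mu(x)$. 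In particular $\tilde V$ is a rectifiable $2$-varifold.

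For the first variation identity, given $\tilde X\in C^1_c(\mathbb R^n)$, I may (after multiplying by a cutoff) assume $\mathrm{supp}\,\tilde X\subset\mathbb R^n\setminus\{0\}$ since $\tilde\Sigma$ avoids a neighbourhood of $0$. Introduce the pull-back test vector field
\[
X(x) := |x|^{-2}R_x\bigl(\tilde X(f(x))\bigr)\in C^1_c(\mathbb R^n\setminus\{0\}),
\]
which extended by zero is admissible for $V$. The crux is the pointwise identity
\[
\mathrm{div}^\Sigma X(x) \;=\; |x|^{-4}\Bigl(\mathrm{div}^{\tilde\Sigma}\tilde X(y) + 4\bigl\langle R_x(x^\perp),\tilde X(y)\bigr\rangle\Bigr)\qquad\mu\text{-a.e.,}
\]
obtained by differentiating $|x|^{-2}$, $R_x$ and $\tilde X\circ f$ factor by factor along an orthonormal basis $\{e_1,e_2\}$ of $T_x\Sigma$, using $R_x^2 = I$, $R_x(x) = -x$, and the chain rule $D_v(\tilde X\circ f)(x) = |x|^{-2}D\tilde X(y)R_xv$; this is a direct (if bookkeeping-intensive) algebraic calculation which one checks on the model case of a sphere through the origin to pin down signs. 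Integrating against $d\mu$, converting $|x|^{-4}d\mu=d\tilde\mu$ on the right, and using $\int\mathrm{div}^\Sigma X\,d\mu = -\int H\cdot X\,d\mu = -\int|x|^2\langle R_xH,\tilde X\rangle\,d\tilde\mu$ from the first variation of $V$, I obtain
\[
\int\mathrm{div}^{\tilde\Sigma}\tilde X\,d\tilde\mu \;=\; -\int\bigl(|x|^2R_xH + 4R_x(x^\perp)\bigr)\cdot\tilde X\,d\tilde\mu,
\]
which reads as $\delta\tilde V(\tilde X) = -\int\tilde H\cdot\tilde X\,d\tilde\mu$ with $\tilde H$ given by \eqref{inverted mean curvature}. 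Perpendicularity of $\tilde H$ is then automatic from $H+4x^\perp/|x|^2\in T_x\Sigma^\perp$ and $R_x(T_x\Sigma^\perp) = T_{f(x)}\tilde\Sigma^\perp$.

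The two mass identities are pure algebra: orthogonality of $R_x$ gives $|\tilde H|^2 = |x|^4|H+4x^\perp/|x|^2|^2$, and combined with $d\tilde\mu = |x|^{-4}d\mu$ this yields the first identity; for the second, $R_x(x) = -x$ gives $R_x(y) = -y$, and projecting this onto $T_y\tilde\Sigma^\perp = R_x(T_x\Sigma^\perp)$ produces $y^\perp = -R_x(x^\perp)/|x|^2$, whence $\tilde H/4 + y^\perp/|y|^2 = (|x|^2/4)R_xH$ and the second identity follows after squaring. For the density at infinity, set $g(s) := \mu(B(0,s))$ and integrate by parts:
\[
\tilde\mu(B(0,\rho)) = \int_{1/\rho}^\infty s^{-4}\,dg(s) = -\rho^4 g(1/\rho) + 4\int_{1/\rho}^\infty s^{-5}g(s)\,ds.
\]
Dividing by $\pi\rho^2$ and letting $\rho\to\infty$: the first term tends to $-\Theta(\mu,0)$ by definition of density, and the tail piece — split at $s=\rho^{-1/2}$ to exploit $g(s)\sim\pi\Theta(\mu,0)s^2$ near $0$ and $g\le\mu(\mathbb R^n)$ globally — contributes $2\Theta(\mu,0)$, giving $\Theta(\tilde\mu,\infty) = \Theta(\mu,0)$. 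The main technical hurdle is the divergence identity in the middle paragraph: in the smooth setting it follows easily from the conformal transformation of mean curvature, but in the varifold setting it must be verified $\mu$-a.e.\ using only the approximate tangent plane structure of the rectifiable varifold, and care must be taken that the cutoff near $0$ passes to the limit (which is immediate here because $\tilde\Sigma$ is uniformly away from $p$).
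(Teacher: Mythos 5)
Your argument for the first variation and mean curvature identity is essentially the paper's, written in a dual normalization: your pull-back $X(x)=|x|^{-2}R_x\bigl(\tilde X(f(x))\bigr)$ equals the paper's pull-back $X_{\mathrm{pap}}(x)=|x|^2R_x\bigl(\tilde X(f(x))\bigr)$ divided by $|x|^4$, so your claimed pointwise identity
$\operatorname{div}^\Sigma X=|x|^{-4}\bigl(\operatorname{div}^{\tilde\Sigma}\tilde X+4\langle R_x(x^\perp),\tilde X\rangle\bigr)$
is exactly the paper's $\operatorname{div}^{\tilde\Sigma}\tilde X=\operatorname{div}^\Sigma X_{\mathrm{pap}}-4\langle X_{\mathrm{pap}},x\rangle/|x|^2$ combined with the Leibniz rule for $\operatorname{div}^\Sigma(X_{\mathrm{pap}}/|x|^4)$, which the paper also performs (it just does the Leibniz step at the integral level). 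The algebraic derivations of $\tilde H=|x|^2R_x(H+4x^\perp/|x|^2)$, of its perpendicularity, of $|\tilde H|^2d\tilde\mu=|H+4x^\perp/|x|^2|^2d\mu$, and of $\tilde H/4+y^\perp/|y|^2=\tfrac{|x|^2}{4}R_xH$ (via $R_x(y)=-y$ and $T_y\tilde\Sigma^\perp=R_x(T_x\Sigma^\perp)$) all check out and agree with the paper.

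Where you genuinely diverge is the density at infinity. The paper invokes the Simon--Kuwert--Sch\"atzle monotonicity identity for $\tilde V$ centred at $p\notin\tilde\Sigma$, shows the boundary term $\rho^{-2}\int_{B_\rho}\langle\tilde r\tilde\nabla^\perp\tilde r,\tilde H\rangle d\tilde\mu$ vanishes as $\rho\to\infty$, and then matches the remaining terms with the monotonicity identity for $V$ at $p$ using the two mass identities. You instead compute $\tilde\mu(B(0,\rho))=\int_{1/\rho}^\infty s^{-4}\,dg(s)$ with $g(s)=\mu(B(0,s))$ and do a Stieltjes integration by parts, which is more elementary and avoids the monotonicity formula for $\tilde V$ entirely. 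However, as written your tail estimate does not close: with the split at $s=\rho^{-1/2}$ and only the global bound $g\le\mu(\mathbb R^n)$ on $[\rho^{-1/2},\infty)$, one gets
$\frac{4}{\pi\rho^2}\int_{\rho^{-1/2}}^\infty s^{-5}g(s)\,ds\le\frac{\mu(\mathbb R^n)}{\pi}$,
which is $O(1)$ rather than $o(1)$. The fix is to also use the quadratic bound $g(s)\le C s^2$ for $s\le s_0$ --- immediate from the existence of the limit $\Theta(\mu,0)=\lim_{s\to0}g(s)/(\pi s^2)$ provided by \eqref{dominate remainder term} --- on the intermediate range $[\rho^{-1/2},s_0]$, or equivalently split at a fixed small $s_0$ instead of the $\rho$-dependent $\rho^{-1/2}$: then $[\,1/\rho,s_0]$ contributes $2\Theta(\mu,0)+O(\eta(s_0))$ and $[s_0,\infty)$ contributes $O(\rho^{-2}s_0^{-4})\to0$. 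With this correction your more direct route is valid and gives the same conclusion.
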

\begin{proof}
By Lemma \eqref{compactness}, we know $f(\Sigma)$ is a closed set properly embedded in $\mathbb{R}^m$ such that $p\notin f(\Sigma)$. Thus we only need to consider vector field $\tilde{X}\in C_c^1(\mathbb{R}^n\backslash \{p\})$. We first calculate $div^{\tilde{\Sigma}}\tilde{X}(y)=\text{tr}_{T_y\tilde{\Sigma}}D\tilde{X}(y)$. For this, assume $x\in \Sigma$ such that $y=f(x)$
 and $T_x\Sigma$ exists, letting $T_x\Sigma=\text{span}\{e_1,e_2\}$, then $T_y\tilde{\Sigma}=\text{span}\{f_*(e_i)\}_{i=1,2}$. Without loss of generality, assuming $p=0$, then $f(x)=\frac{x}{|x|^2}$ and we have 
 \begin{align}\label{differential of inversion}
 df(e_i)=\frac{R_x(e_i)}{|x|^2},
 \end{align}
 where $R_x(v)=v-2\langle \frac{x}{|x|},v\rangle \cdot \frac{x}{|x|}$ is the reflection with respect to the direction $\frac{x}{|x|}$. So,$T_y\tilde{\Sigma}=T_x\Sigma$ (with opposite orientation). 
 If we denote $h(y)=f^{-1}(y)=\frac{y}{|y|^2}$ and put $X(x)=|x|^2\tilde{X}(f(x))-2\langle \tilde{X}(f(x)),x\rangle x$, then 
 \begin{align*}
 X(x)=R_x(|x|^2\tilde{X}(f(x)))  \text{ and } \tilde{X}(y)=|y|^2R_y(X(h(y))). 
 \end{align*}
 Noting that $X\in C_c^1(\mathbb{R}^n\backslash\{0\})$, we know $X(x)=0$ for $x$ close to $0\in \mathbb{R}^n$. 
So, we have 
\begin{align*}
D\tilde{X}=2\langle y, dy\rangle \otimes X(h(y))&+|y|^2DX(h(y))\circ d_yh\\
&-2\langle DX\circ d_yh,y\rangle\otimes y-2\langle X(h(y)),dy\rangle\otimes y-2\langle X(h(y)),y\rangle dy
\end{align*}
 and 
 \begin{align*}
 div^{T_y\tilde{\Sigma}}\tilde{X}(y)&=\langle D_{e_i}\tilde{X},e_i\rangle\\
 &=2\langle y^{\top},X\rangle +\sum_{i}(\langle DX(R_y(e_i)),e_i\rangle -2\langle DX,R_y(e_i,\frac{y}{|y|})\rangle \langle \frac{y}{|y|},e_i\rangle) -2\langle X,y^{\top}\rangle-4\langle X,y\rangle\\
 &=\sum_{i=1,2}\langle R_y\circ DX\circ R_y(e_i),e_i\rangle-4\langle X,y\rangle\\
 &=\text{tr}_{T_x\Sigma}R_y\circ DX\circ R_y-4\langle X,y\rangle\\
 &=div^{T_x\Sigma}X-4\langle X,y\rangle.
 \end{align*}
 By the area formula for varifold and noting $J_f(x)=\frac{1}{|x|^4}$, we know that
 \begin{align*}
 \delta\tilde{V}(\tilde{X})&=\int_{\tilde{\Sigma}}div^{T_y\tilde{\Sigma}}\tilde{X}(y)d\tilde{\mu}(y)\\
 &=\int_{\Sigma}div^{T_x\Sigma}X-4\langle X,\frac{x}{|x|^2}\rangle \theta(x)J_f(x)d\mathcal{H}^2\llcorner \Sigma\\
 &=\int_{\Sigma}\frac{div^{T_x\Sigma}x}{|X|^{4}}-4\frac{\langle X,x\rangle}{|x|^6}d\mu(x)\\
 &=\int_{\Sigma}div^{T_x\Sigma}(\frac{X}{|x|^{4}})-4\frac{\langle X,x^{\bot}\rangle}{|x|^6}d\mu(x)\\
 &=-\int_{\Sigma}\frac{\langle X,H\rangle }{|x|^{4}}-4\frac{\langle X,x^{\bot}\rangle}{|x|^6}d\mu(x)\\
 &=-\int_{\tilde{\Sigma}}|x|^2\langle R_x(\tilde{X}),H\rangle-4\langle R_x(\tilde{X}),x^{\bot}\rangle d\tilde{\mu}(y)\\
 &=-\int_{\tilde{\Sigma}}\langle df_x(|x|^4(H+4\frac{x^{\bot}}{|x|^2})),\tilde{X}\rangle d\tilde{\mu}(y).
 \end{align*}
 This implies 
 $$\tilde{H}(y)=df_x(|x|^4(H+4\frac{x^{\bot}}{|x|^2}))=|x|^2R_x(H+4\frac{x^{\bot}}{|x|^2})$$
 and hence $\tilde{H}(y)\bot T_y\tilde{\Sigma}$ for $\tilde{\mu}$-a.e. $y\in \tilde{\Sigma}$ and 
 \begin{align*}
 |\tilde{H}|^2d\tilde{\mu}(y)=|H+4\frac{x^\bot}{|x|^2}|^2d\mu(x).
 \end{align*}
 For the same reason, we have 
 \begin{align*}
 \frac{|H|^2}{16}d\mu(x)=|\frac{\tilde{H}}{4}+\frac{y^{\bot}}{|y|^2}|^2d\tilde{\mu}(y).
 \end{align*}
 Noting that $p\notin \tilde{\Sigma}$, by the monotonicity identity again, we have 
 \begin{align*}
 0&=\frac{\tilde{\mu}(B(p,\rho))}{\rho^2}+\frac{1}{16}\int_{B(p,\rho)}|\tilde{H}|^2d\tilde{\mu}-\int_{B(p,\rho)}|\frac{\tilde{H}}{4}+\frac{\tilde{\nabla}^{\bot}\tilde{r}}{\tilde{r}}|^2d\tilde{\mu}+\frac{2}{\rho^2}\int_{B(p,\rho)}\langle \tilde{r}\tilde{\nabla}^{\bot}\tilde{r},\tilde{H}\rangle d\tilde{\mu}\\
 &\ge \frac{\tilde{\mu(B(p,\rho))}}{2\rho^2}-(2-\frac{1}{16})\cdot 16\int_{B(p,\rho)}|\frac{H}{4}+\frac{\nabla^{\bot}r}{r}|^2d\mu-\frac{1}{16}\int_{\mathbb{R}^n}|H|^2d\mu.
 \end{align*}
 Substituting \eqref{dominate remainder term} into this inequality, we get
 \begin{align*}
\pi \Theta^*(\tilde{\mu},+\infty):=\limsup_{\rho\to \infty}\frac{\tilde{\mu}(B(p,\rho))}{\rho^2}\le 5\int_{\mathbb{R}^n}|H|^2d\mu<+\infty.
 \end{align*}
 So, by letting $\rho\to \infty$ in 
 \begin{align*}
 |\frac{2}{\rho
^2}\int_{B_\rho}\langle \tilde{r}\tilde{\nabla}^{\bot}\tilde{r},\tilde{H}\rangle|^2d\tilde{\mu}|&\le \frac{1}{\rho^2}(\int_{B_R}\rho|\tilde{H}|d\tilde{\mu}+\int_{B_\rho\backslash B_R}|\tilde{H}|d\tilde{\mu})\\
&\le \left(\frac{\tilde{\mu}(B_R)}{\rho^2}\right)^{\frac{1}{2}} \left(\int_{B_R}|\tilde{H}|^2d\tilde{\mu}\right)^{\frac{1}{2}}+ \left(\frac{\tilde{\mu}\left(B_\rho\backslash B_R\right)}
{\rho^2}\right)^{\frac{1}{2}} \left(\int_{B_\rho\backslash B_R}|\tilde{H}|^2d\tilde{\mu}\right)^{\frac{1}{2}},
 \end{align*}
  we get 
  \begin{align*}
  \limsup_{\rho\to \infty} |\frac{2}{\rho
^2}\int_{B_\rho}\langle \tilde{r}\tilde{\nabla}^{\bot}\tilde{r},\tilde{H}\rangle|^2d\tilde{\mu}|\le\Theta^*(\tilde{\mu},\infty)^{\frac{1}{2}}\left(\int_{\mathbb{R}^n\backslash B_R}|\tilde{H}|^2d\tilde{\mu}\right)^{\frac{1}{2}}.
  \end{align*}
  Letting $R\to \infty$, we know 
  \begin{align*}
  \limsup_{\rho\to \infty} |\frac{2}{\rho
^2}\int_{B_\rho}\langle \tilde{r}\tilde{\nabla}^{\bot}\tilde{r},\tilde{H}\rangle|^2d\tilde{\mu}|=0,
  \end{align*}
   and hence by combining  the monotonicity formulae  of $\tilde{V}$ and $V$ together, we know 
   \begin{align*}
  \lim_{\rho\to \infty} \frac{\tilde{\mu}(B(p,\rho))}{\rho^2}&=\lim_{\rho\to \infty}\left[-\frac{1}{16}\int_{B(p,\rho)}|\tilde{H}|^2d\tilde{\mu}+\int_{B(p,\rho)}|\frac{\tilde{H}}{4}+\frac{\tilde{\nabla}^{\bot}\tilde{r}}{\tilde{r}}|^2d\tilde{\mu}\right]\\
  &=\lim_{\rho\to \infty}\left[\frac{1}{16}\int_{\mathbb{R}^n\backslash B(p,\frac{1}{\rho})}|H|^2d\mu-\int_{\mathbb{R}^n\backslash B(p,\frac{1}{\rho})}|\frac{H}{4}+\frac{\nabla^{\bot}r}{r}|^2d\mu\right]\\
  &=\pi\Theta(\mu,p).
  \end{align*}
  Thus the limit exists and $\Theta(\tilde{\mu},\infty)=\Theta(\mu,p)$.
\end{proof}

As a corollary, we have
\begin{corollary}\label{Critical Allard condition for the inverted varifold}Assume $V=\underline{v}(\Sigma,\theta)$ is a rectifiable $2$-varifold in $\mathbb{R}^n$ with perpendicular generalized mean curvature $H\in L^2(d\mu)$, $\theta\ge 1$ and $\mu(\mathbb{R}^m)<+\infty$.  If 
$$\int_{\mathbb{R}^n}|H|^2d\mu\le 16\pi(1+\delta^2)$$
for some $\delta<1$, then for any $p\in \Sigma$, the inverted varifold $\tilde{V}=f_{p\sharp}V=\underline{v}(\tilde{\Sigma},\tilde{\theta})$ satisfies
\begin{align*}
\int_{\mathbb{R}^n}|\tilde{H}|^2d\tilde{\mu}\le 16\pi\delta^2\quad \quad  \text{ and } \quad \quad 
\Theta(\tilde{\mu},\infty)\le 1+\delta^2.
\end{align*}
In particular, $\int_{\Sigma}|H|^2d\mu\leq 16\pi$ holds if and only if $\theta\equiv 1$ and $\Sigma$ is a round sphere.
\end{corollary}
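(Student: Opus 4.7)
The plan is to combine the pointwise identity $|\tilde H|^2\,d\tilde\mu = 16\bigl|\tfrac{H}{4}+\tfrac{\nabla^{\bot}r}{r}\bigr|^2\,d\mu$ from Lemma \ref{inverted lemma} (after translating $p$ to the origin so that $r=|x|$ and $\nabla^{\bot}r/r=x^{\bot}/|x|^2$) with the global monotonicity identity recorded in Lemma \ref{Willmore lower bound}. Integrating the former over $\mathbb R^n$ and substituting the latter at the base point $p\in\Sigma$ gives
\begin{equation*}
\int_{\mathbb R^n}|\tilde H|^2\,d\tilde\mu \;=\; \int_{\mathbb R^n}|H|^2\,d\mu \;-\; 16\pi\,\Theta(\mu,p).
\end{equation*}
Since $p\in\Sigma$ together with $\theta\geq 1$ forces $\Theta(\mu,p)\geq 1$ (by upper semi-continuity of the density, as already used in Lemma \ref{compactness}), the hypothesis $\int|H|^2d\mu\leq 16\pi(1+\delta^2)$ yields the first bound $\int|\tilde H|^2\,d\tilde\mu\leq 16\pi\delta^2$.

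For the density at infinity, Lemma \ref{inverted lemma} already identifies $\Theta(\tilde\mu,\infty)=\Theta(\mu,p)$, so discarding the nonnegative remainder term in the same monotonicity identity at $p$ gives
\begin{equation*}
\Theta(\mu,p)\;\leq\;\frac{1}{16\pi}\int_{\mathbb R^n}|H|^2\,d\mu\;\leq\;1+\delta^2,
\end{equation*}
which is the second bound.

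For the rigidity clause, the assumption $\int|H|^2d\mu\leq 16\pi$ combined with the Li--Yau bound of Lemma \ref{Willmore lower bound} forces $\int|H|^2d\mu = 16\pi$ and $\theta(x)=\Theta(\mu,x)=1$ for every $x\in\Sigma$. Taking $\delta=0$ in the first step, $\tilde H\equiv 0$ $\tilde\mu$-a.e., so $\tilde V$ is a stationary rectifiable $2$-varifold on $\mathbb R^n\backslash\{p\}$ with unit density a.e.\ and $\Theta(\tilde\mu,\infty)=1$. Stationary monotonicity at any $y\in\tilde\Sigma$ then sandwiches $1\leq\Theta(\tilde\mu,y)\leq\Theta(\tilde\mu,\infty)=1$, and its equality case forces $\tilde V$ to be a cone from $y$; being a cone from every point of its support makes $\tilde V$ translation invariant, hence a multiplicity-one affine $2$-plane. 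Since $p\notin\tilde\Sigma$, this plane avoids $p$, and its preimage under $f_p$ is a round $2$-sphere through $p$, so $\Sigma$ is a round sphere and $\theta\equiv 1$. The converse is immediate since $|H|=2/R$ and $\mathcal H^2=4\pi R^2$ on a radius-$R$ sphere.

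I expect the main difficulty to lie in this last paragraph, namely promoting the equality case of stationary monotonicity at each interior point to the conclusion that the stationary rectifiable varifold $\tilde V$ (not assumed integral) is an affine plane; the rest of the argument is a direct manipulation of the two formulas supplied by the preceding lemmas.
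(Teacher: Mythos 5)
Your derivation of the two bounds coincides with the paper's: you combine the identity $\int|\tilde H|^2\,d\tilde\mu = \int|H|^2\,d\mu - 16\pi\Theta(\mu,p)$ (which cleans up a factor typo in the paper's own display) with $\Theta(\mu,p)\ge 1$, and you identify $\Theta(\tilde\mu,\infty)=\Theta(\mu,p)$ and drop the nonnegative remainder in the monotonicity identity. Same computation, same lemmas.

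The rigidity clause is where you genuinely diverge. The paper, after observing that $\tilde V$ is stationary with $\Theta(\tilde\mu,\infty)=1$ and $\tilde\theta\ge 1$, simply cites Allard's regularity theorem to conclude $\tilde\Sigma$ is a plane with $\tilde\theta\equiv 1$. You instead run the equality case of the monotonicity formula at each point of $\tilde\Sigma$: the sandwich $1\le\Theta(\tilde\mu,y)\le\Theta(\tilde\mu,\infty)=1$ forces $\tilde V$ to be a cone from every $y\in\tilde\Sigma$, and then cone-splitting (invariance under translation along $y_2-y_1$ for any two vertices, plus $2$-rectifiability to supply two independent directions, plus $\Theta(\tilde\mu,\infty)=1$ to rule out excess mass or multiplicity) yields the multiplicity-one plane. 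Your route is more elementary in that it avoids Allard's theorem entirely and works for a general rectifiable (not a priori integral) varifold with $\theta\ge 1$; the paper's route is shorter to state but, read literally, also needs an unstated passage from Allard's local $C^{1,\alpha}$ conclusion to a global plane, which is in fact the same monotonicity/cone argument you make explicit. The one place you gloss over is exactly the step you flag as the main difficulty: ``cone from every point $\Rightarrow$ translation invariant $\Rightarrow$ plane'' deserves the two-sentence justification (choose three non-collinear points of the $2$-rectifiable support, apply cone-splitting twice, and then use $\Theta(\tilde\mu,\infty)=1$ together with $\tilde\theta\ge 1$ to see the resulting translation-invariant varifold is exactly the multiplicity-one plane). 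With that filled in, your argument is complete and correct.
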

\begin{proof} Since $\theta\ge 1$ for $\mu$-a.e. $x\in \mathbb{R}^n$,  we know for any $p\in \Sigma$, there holds $\Theta(\mu,p)\ge 1$. Denote $f_p(x)=\frac{x-p}{|x-p|^2}+p$ and the inverted varifold by $f_{p\sharp}V=\tilde{V}=\underline{v}(\tilde{\Sigma},\tilde{\theta})$. Then, by the monotonicity formula, we know 
\begin{align*}
1\le \Theta(\mu,p)=\frac{1}{16\pi}\int_{\mathbb{R}^n}|H|^2d\mu-\frac{1}{\pi}\int_{\mathbb{R}^n}\big|\frac{H}{4}+\frac{\nabla^{\bot}r}{r}\big|^2d\mu,
\end{align*}
where $r(x)=|x-p|$. Thus we have 
\begin{align*}
\int_{\mathbb{R}^n}|H|^2d\mu\ge 16\pi.
\end{align*}
If $\int_{\mathbb{R}^n}|H|^2d\mu\le 16\pi(1+\delta^2)$,  then Lemma \ref{inverted lemma} implies 
 \begin{align*}
 \int_{\mathbb{R}^n}|\tilde{H}|^2d\tilde{\mu}=\int_{\mathbb{R}^n}|\frac{H}{4}+\frac{x^\bot}{|x|^2}|^2d\mu=\frac{1}{16}\int_{\mathbb{R}^n}|H|^2d\mu-\Theta(\mu,p)\le \delta^2.
 \end{align*}
 and 
 \begin{align*}
 \Theta(\tilde{\mu},\infty)=\Theta(\mu,p)\le \frac{1}{16\pi}\int_{\mathbb{R}^n}|H|^2d\mu\le 1+\delta^2.
 \end{align*}
 In particular, when $\int_{\mathbb{R}^m}|H|^2d\mu=16\pi$, we know $\delta=0$ and hence 
 $$ \int_{\mathbb{R}^n}|\tilde{H}|^2d\tilde{\mu}=0 \text{ and } \Theta(\tilde{\mu},\infty)=1,$$
 i.e., $\tilde{V}$ is a stationary varifold with $\Theta(\tilde{\mu},\infty)=1$. Now Allard's regularity theorem implies $\tilde{\Sigma}$ is a plane with $\tilde{\theta}\equiv 1$. This means $\theta\equiv 1$ and $\Sigma$ is a round sphere.
\end{proof}

\section{Quantitative rigidity of the inverted varifold}\label{sec:Quantitative plane}
Next we analysis the quantitative rigidity for the inverted varifold. From now on, we need to assume the varifold to be integral. 

\begin{theorem}\label{inverted quantitative rigidity}There exists $\delta_0>0$ such that for any $\delta<\delta_0$ and any integral varifold  $\tilde{V}=\underline{v}(\tilde{\Sigma},\tilde{\theta})$  in $\mathbb{R}^n$ with
\begin{align*}
\int_{\mathbb{R}^n}|\tilde{H}|^2d\tilde{\mu}\le 16\pi\delta^2\quad \quad  \text{ and } \quad \quad 
\Theta(\tilde{\mu},\infty)\le 1+\delta^2,
\end{align*}
 there exists a $W^{2,2}_{loc}$ conformal parameterization $f:\mathbb{R}^2\to \tilde{\Sigma}$ such that $df\otimes df=e^{2w}g_{euc}$, $f(0)=0$,
\begin{align}\label{quantitative C^0}
\|w\|_{C^0(\mathbb{R}^2)}+\|Dw\|_{L^2(\mathbb{R}^2)}+\|D^2w\|_{L^1(\mathbb{R}^2)}+\|D^2f\|^2_{L^2(\mathbb{R}^2)}+\int_{\tilde{\Sigma}}|\tilde{A}|^2d\tilde{\mu}\le C \delta^2,
\end{align}
and
\begin{align}\label{quantitative bilipschitz}
(1-C\delta)|x-y|\le |f(x)-f(y)|\le (1+C\delta^2)|x-y|,
\end{align}
where $\tilde{A}$ is the second fundamental form of $\tilde{\Sigma}$ which is well-defined as the result and $C=C(n)$ is a constant.   Moreover,for any $r>0$ and $y\in \mathbb{R}^2$,  there exists $P_{r,y}\in Hom(\mathbb{R}^2,\mathbb{R}^n)$ such that $P_{r,y}^{*}P_{r,y}=id_{\mathbb{R}^2}$ and 
\begin{align}\label{approximate}
|f(x)-f(y)-P_{r,y}(x-y)|\le C\delta(r+\frac{|x-y|^2}{r}), \quad \quad \forall x\in \mathbb{R}^2.
\end{align}
\end{theorem}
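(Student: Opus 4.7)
The plan is to apply Theorem \ref{2022b-main} at every base point and every scale on $\tilde\Sigma$ in order to produce local bi-Lipschitz conformal charts with multiplicity one, patch them into a global conformal parameterization $f:\mathbb R^2\to\tilde\Sigma$, and then upgrade the qualitative closeness modulus $\psi(\delta)$ of Theorem \ref{2022b-main} to the sharp $O(\delta^2)$ bounds in \eqref{quantitative C^0} by exploiting the Liouville equation for the conformal factor together with the vector-valued Laplace equation for $f$.

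First I would verify the critical Allard hypotheses of Theorem \ref{2022b-main} for the rescaled varifold $(\eta_{x,r})_\sharp\tilde V$ uniformly in $x\in\tilde\Sigma$ and $r>0$. The mean-curvature condition \eqref{mean curvature bound} is inherited from the global bound $\int|\tilde H|^2\,d\tilde\mu\le 16\pi\delta^2$. The density condition \eqref{density condition} is obtained by running the monotonicity identity from the scale $r$ all the way to $+\infty$ and plugging in $\Theta(\tilde\mu,\infty)\le 1+\delta^2$, the weighted mean-curvature remainder being controlled via Cauchy--Schwarz exactly as at the end of the proof of Lemma \ref{inverted lemma}. Theorem \ref{2022b-main} then shows that $\tilde\Sigma$ is locally a $W^{2,2}$-graph of multiplicity one, that it is connected and simply connected (by a separation argument as in Lemma \ref{compactness}, using $\int|\tilde H|^2<32\pi$), and that it has a single end of quadratic area growth. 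The uniformization theorem, together with the area growth ruling out the hyperbolic type, then delivers a global conformal parameterization $f:\mathbb R^2\to\tilde\Sigma$ with $f(0)=0$ and conformal factor $w\in W^{2,2}_{\mathrm{loc}}(\mathbb R^2)$, whose smallness is at this stage only qualitative.

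The quantitative upgrade is PDE-based. In conformal coordinates the Gauss equation takes the Liouville form $-\Delta w=Ke^{2w}$ on $\mathbb R^2$ and $f$ satisfies $\Delta f=e^{2w}\tilde H$. Since $2|K|\le|\tilde A|^2$ and a Gauss--Bonnet-type identity on the noncompact surface using $\Theta(\tilde\mu,\infty)=1+O(\delta^2)$ and $\int|\tilde H|^2\,d\tilde\mu\le 16\pi\delta^2$ gives $\int|\tilde A|^2\,d\tilde\mu\le C\delta^2$, the logarithmic Newtonian potential representation of $w$ (normalized by $w\to 0$ at infinity, itself a consequence of $\Theta(\tilde\mu,\infty)=1+O(\delta^2)$) yields
\begin{equation*}
\|w\|_{L^\infty(\mathbb R^2)}+\|\nabla w\|_{L^2(\mathbb R^2)}+\|\nabla^2 w\|_{L^1(\mathbb R^2)}\le C\|K\|_{L^1(\mathbb R^2)}\le C\delta^2.
\end{equation*}
Plugging $e^{2w}=1+O(\delta^2)$ back into $\Delta f=e^{2w}\tilde H$ and applying Calder\'on--Zygmund yields $\|\nabla^2 f\|_{L^2(\mathbb R^2)}^2\le C\delta^2$, completing \eqref{quantitative C^0}. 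The upper bi-Lipschitz constant in \eqref{quantitative bilipschitz} is immediate from $|Df|=e^w\le 1+C\delta^2$; the lower constant is only $1-C\delta$ because the direction of $Df$ (the Gauss map) is controlled only in $L^2$ by $\|\tilde A\|_{L^2}=O(\delta)$, and integrating the frame along a path produces an $O(\delta)$ rotation, hence an $O(\delta)$ possible defect between chord length and arc length. The affine approximation \eqref{approximate} is then obtained by a Poincar\'e inequality on $D_r(y)$ applied to $Df-P_{r,y}$, with $P_{r,y}$ taken to be the isometric linear map coming from the isometric embedding $i_r$ of Theorem \ref{2022b-main} rescaled to $(y,r)$; the $L^2$-bound $\|\nabla^2 f\|_{L^2}\le C\delta$ furnishes the requested $C\delta$ constant.

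The main technical obstacle is twofold: first, getting the sharp quadratic-in-$\delta$ bound on the conformal factor, which requires precise asymptotic normalization $w\to 0$ at infinity tied to $\Theta(\tilde\mu,\infty)=1+O(\delta^2)$ rather than merely boundedness; and second, constructing the global parameterization on the noncompact $\mathbb R^2$ rather than on the disk as in Theorem \ref{2022b-main}, where verifying that the conformal type is $\mathbb C$ (and not $\mathbb D$) uses the density bound at infinity in an essential way. The asymmetry between the two bi-Lipschitz constants encodes precisely the fact that $w$ is small in $L^\infty$ while the Gauss map is only small in $L^2$.
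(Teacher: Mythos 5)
Your overall plan is sound at the skeleton level (critical Allard at all scales, build a global conformal chart, then upgrade to sharp bounds via PDE), and your global-chart construction is in the spirit of the paper's (the paper takes the weak-$W^{2,2}$ limit of the charts $f_R$ on $D_R$ as $R\to\infty$). But the heart of the quantitative step has a genuine gap, and the affine approximation as you state it does not give the global estimate.

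The main gap is your claim that the logarithmic Newtonian potential representation of $w$ gives $\|w\|_{L^\infty(\mathbb{R}^2)}\le C\|K\|_{L^1(\mathbb{R}^2)}$. This is false: the logarithmic potential of a general $L^1$ function on $\mathbb{R}^2$ is only in BMO, not $L^\infty$ (think of $L^1$ approximations of a Dirac mass). To obtain an $L^\infty$ bound one must use that the right-hand side of the Liouville equation has \emph{compensated-compactness} structure, i.e. lies in the Hardy space $\mathcal{H}^1(\mathbb{R}^2)$. The standard route is M\"uller--\v{S}ver\'ak, who write $Ke^{2w}$ as a Jacobian determinant of the Gauss map. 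The paper takes a different but equally Hardy-space-based route (Proposition \ref{New Hardy structure}): it works with $v=e^{2w}=\frac12|Df|^2$ directly and uses Bochner's formula to obtain $\Delta v=-2\sum_\alpha\det(D^2 f^\alpha)$, a sum of $2\times 2$ Jacobians of $\nabla f^\alpha$, which is in $\mathcal{H}^1$ by Coifman--Lions--Meyer--Semmes since $D^2 f\in L^2$. Your argument, as written, never invokes any Hardy-space or div-curl structure, and so cannot yield the $C^0$ bound.

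A second, smaller issue: the affine approximation \eqref{approximate} is claimed for all $x\in\mathbb{R}^2$, but a Poincar\'e inequality for $Df-P_{r,y}$ on $D_r(y)$ only controls $f(x)-f(y)-P_{r,y}(x-y)$ for $x\in D_r(y)$; it says nothing once $|x-y|\gg r$. The paper handles the global statement by a dyadic telescoping argument: it proves the local approximation on $D_{R_i}$ at dyadic scales $R_i=2^i$ (using a Morrey/Sobolev embedding together with a Chebyshev selection of a near-orthogonal frame, not Poincar\'e directly, since one needs $P_{r,y}$ to be exactly an isometric linear map), then bounds $\|P_{R_i}-P_{R_{i+1}}\|\le C\delta$ and sums a telescoping series. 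Without this, the $C\delta(r+|x-y|^2/r)$ bound for all $x$ does not follow, and consequently neither does the lower bi-Lipschitz constant (which the paper extracts by specializing $r=|x-y|$ in \eqref{approximate}). Your "integrating the frame along a path" heuristic points in the right direction but is not a substitute for this two-scale patching.
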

\begin{proof}
Step 1.% Under the assumption $V$ is integral,   apply bi-Lipschitz regularity for varifold with  critical Allard condition \cite{BZ-2022b} to get 
%\begin{align*}
%\int_{\tilde{\Sigma}}|\tilde{A}|^2d\tilde{\mu}\le \psi(\delta)<+\infty 
%\end{align*}
%and the existence of conformal parameterization $\varphi:\mathbb{C}\to \Sigma$ such that $d\varphi\otimes d\varphi=e^{2u}g_{euc}$ such that  $\varphi\in W^{2,2}_{loc}(\mathbb{C})$ and 
%\begin{align*}
%(1-\psi)|x-y|\le |\varphi(x)-\varphi(y)|\le (1+\psi)|x-y|, \|u\|_{L^{\infty}}\le \psi\text{ and } \int_{\mathbb{C}}|D^2\varphi|\le \psi.
%\end{align*}
%More precisely, 
Since $\tilde{V}$ is an integral varifold with  $\Theta(\tilde{\mu},\infty)\le 1+\delta^2$ and $\int_{\mathbb{R}^n}|\tilde{H}|^2d\tilde{\mu}\le 16\pi\delta^2$, by Theorem \ref{2022b-main}, we know that for fixed $p\in\tilde{\Sigma}$ and  $R\gg 1$,  there exist a bi-Lipschitz conformal parameterization $f_R: D(0,R)\to \tilde{\Sigma}$ such that $df_R\otimes df_R=e^{2w_R}(dx^1\otimes dx^1+dx^2\otimes dx^2)$ satisfying
\begin{enumerate}
\item $\tilde{\Sigma}\cap B(0,(1-\psi)R)\subset f_R(D(0,R))\subset B(0,(1+\psi)R)\cap \tilde{\Sigma}$, $\quad f_R(0)=p$,\\
 \item $\|w_R\|_{L^\infty(D_R)}\le \psi, \quad \int_{D(0,R)}|D^2f_R|^2\le \psi$,\\
 \item $(1-\psi)|x-y|\le |f_R(x)-f_R(y)|\le (1+\psi)|x-y|,\forall x,y\in D(0,R).$
\end{enumerate}
By Arzel\`{a}-Ascoli lemma, letting $R\to \infty$, we know there exists a subsequence $f_{R_i}$ converges weakly in $W^{2,2}$ to a limit $f:\mathbb{C}\to \tilde{\Sigma}$ such that $f\in W^{2,2}_{conf, loc}(\mathbb{C},\mathbb{R}^n)$ is a $W^{2,2}_{loc}$-conformal immersion(See \cite{KL-2012} for definition) satisfying 
\begin{enumerate}
\item $\tilde{\Sigma}=f(\mathbb{C})$,\\
\item $df\otimes df=e^{2w}(dx^1\otimes dx^1+dx^2\otimes dx^2)$ with $\|w\|_{L^{\infty}}\le \psi$\\
\item $(1-\psi)|x-y|\le |f(x)-f(y)|\le(1+\psi) |x-y|$,\\
\item $\int_{\mathbb{C}}|D^2f|^2dx^1dx^2\le \psi.$
\end{enumerate}
%By scaling(choosing $\tilde{f}=f(\frac{x}{1-\psi})$), without loss of generality,  we can assume $$|f(x)-f(y)|\ge |x-y|.$$
Moreover, noting that \begin{align*}
|D^2f|^2-|\Delta f|^2=\text{div}X,
\end{align*}
for $X=f_{ij}f_i\partial_j-f_{jj}f_i\partial_i$, we know  that for any cut-off function $\eta\in W_0^{1,2}(\mathbb{C})$, there holds 
\begin{align*}
|\int_{\mathbb{C}}(D^2f-(\Delta f)^2)\eta|=|\int_{\mathbb{C}}-f_{ij}f_i\eta_j+f_{jj}f_i\eta_i|\le Lip f\left(\int_{\mathbb{C}}|D^2f|^2\int_{\mathbb{C}}|D\eta|^2\right)^{\frac{1}{2}}\le C\|D\eta\|_{L^2(\mathbb{C})}.
\end{align*}
By taking logarithm cut-off function and applying dominating convergence theorem, we know 
\begin{align}\label{Hessian estimate}
\int_{\mathbb{C}}|D^2f|^2=\int_{\mathbb{C}}|\Delta f|^2\le \sup e^{2w}\int_{\mathbb{C}}|H|^2e^{2w}dx^1dx^2\le 16\pi  e^{2\psi}\delta^2.
\end{align}
This further implies  
\begin{align*}
\int_{\tilde{\Sigma}}|\tilde{A}|^2d\tilde{\mu}\le \int_{\mathbb{C}}|D^2f|^2e^{-2w}\le e^{2\psi}\int_{\mathbb{C}}|D^2f|^2dx^1dx^2\le 16\pi e^{4\psi}\delta^2.
\end{align*}
Next, we are going to estimate the $L^\infty$ norm of the conformal factor $e^{2w}$ in a quantitative way. For this, the well-known approach is to apply M\"uller and \v{S}ver\'{a}k\cite{MS95}'s observation on the $\mathcal{H}^1$(Hardy norm) estimate of the Gauss curvature for immersed surfaces with small total curvature. In our case, since we have already obtained the $W^{2,2}$ estimate of $f$, there is an alternative approach using the Bochner formula for conformal immersions. 
\begin{pro}\label{New Hardy structure}
Assume $f\in W^{2,2}_{conf, loc}(\mathbb{C},\mathbb{R}^n)$ is a conformal immersion, i.e.,
$$\langle f_i, f_j\rangle =e^{2w}\delta_{ij},$$
then for  $v=e^{2w}$,  we have
\begin{align*}
\Delta v=-2\sum_{\alpha=1}^ndet(D^2f^\alpha).
\end{align*}
Moreover, if $\|D^2f\|_{L^2(\mathbb{C})}<+\infty$, then  there is a non-negative constant $v(\infty)$ such that $v_0=v-v(\infty)\in W_0^{1,2}\cap W^{2,1}\cap C^{0}$ satisfies
\begin{align}\label{zero boundary}
\lim_{z\to \infty}v_0(z)=0
\end{align}
and
\begin{align}\label{esimate}
\|v_0\|_{C^0(\mathbb{C})}+\|Dv_0\|_{L^2(\mathbb{C})}+\|D^2v_0\|_{L^1(\mathbb{C})}\le C \|D^2f\|^2_{L^2(\mathbb{C})}.
\end{align}
\end{pro}
\begin{proof}
Since $v=\frac{1}{2}|Df|^2$, by Bochner's formula, we have
\begin{align*}
\Delta v&=\sum_{\alpha=1}^{n}\frac{1}{2}\Delta|D f^{\alpha}|^2=\sum_{\alpha=1}^{n}|Hess f^\alpha|^2-\sum_{\alpha=1}^{n}\sum_{i=1}^{2}D_i\Delta f^\alpha D_if^{\alpha}.
\end{align*}
Note that
$$\sum_{\alpha=1}^{n}\sum_{i=1}^{2}D_i\Delta f^\alpha D_if^{\alpha}=\sum_{i=1}^2D_i\langle \Delta f,f_i\rangle-\sum_{\alpha=1}^n\Delta f^{\alpha}\sum_{i=1}^2D_{ii}f^\alpha=-\sum_{\alpha=1}^n(\Delta f^\alpha)^2,$$
where in the last equality we use the mean curvature equation under the conformal coordinate which implies
$$\langle \Delta f, f_i\rangle =\langle \vec{H}e^{2u}, f_i\rangle =0.$$
So, we get
\begin{align*}
\Delta v=\sum_{\alpha=1}^{n}|Hess f^\alpha|^2-(\Delta f^\alpha)^2=-2\sum_{\alpha=1}^ndet(D^2f^\alpha).
\end{align*}
Since $D^2f^\alpha\in L^2$,by \cite{CLMS-1993} we know $det(D^2f^\alpha)\in \mathcal{H}^1(\mathbb{C})$--the Hardy space and
\begin{align*}
\|det(D^2f^{\alpha})\|_{\mathcal{H}^1(\mathbb{C})}\le C\|D^2f\|_{L^2(\mathbb{C})}^2,
\end{align*}
and hence we can solve $\Delta v_0= -2\sum_{\alpha=1}^ndet(D^2f^\alpha)$ with boundary condition (\ref{zero boundary}) such that $v_0$ has the estimate
\begin{align*}
\|v_0\|_{C^0(\mathbb{C})}+\|Dv_0\|_{L^2(\mathbb{C})}+\|D^2v_0\|_{L^1(\mathbb{C})}\le C \|det(D^2f^{\alpha})\|_{\mathcal{H}^1(\mathbb{C})}.
\end{align*}
So, $h=v-v_0$ is a harmonic function. Moreover, since $v\ge 0$ and $\lim_{z\to \infty}v_0(z)=0$, we know that $h$ has lower bound. So, by Liouville's theorem, we know $h$ is a constant, which implies $v(\infty)=\lim_{z\to \infty}v(z)$ exists and the conclusion holds.
\end{proof}

Now, we continue to prove Theorem \ref{inverted quantitative rigidity}.  By Proposition \ref{New Hardy structure} and the estimate $\|w\|_{L^\infty}\le \psi$, we know 
$e^{2w}=v_0+v(\infty)$
such that \eqref{zero boundary} and \eqref{esimate} hold and 
\begin{align*}
v(\infty)=\lim_{x\to \infty}e^{2w}\in (1-\psi,1+\psi).
\end{align*}
So, without loss of generality (replace $f$ by $\tilde{f}(x)=f(\frac{x}{\sqrt{v(\infty)}})$), we can assume $$v(\infty)=\lim_{x\to \infty}\frac{1}{2}|df|^2(x)=1$$ and the four items $(1),(2),(3),(4)$ for $f$ still hold. Thus by \eqref{Hessian estimate},\eqref{zero boundary} and \eqref{esimate}, we know 
$e^{2w}=1+v_0$ with $v_0(\infty)=0$ and
\begin{align*}
\|v_0\|_{C^0(\mathbb{C})}+\|Dv_0\|_{L^2(\mathbb{C})}+\|D^2v_0\|_{L^1(\mathbb{C})}\le C\cdot 16\pi e^{2\pi}\delta^2.
\end{align*}
This implies $w=\frac{1}{2}\log{(1+v_0)}$ satisfies $w(\infty)=0$ and 
\begin{align*}
\|w\|_{C^0(\mathbb{C})}+\|Dw\|_{L^2(\mathbb{C})}+\|D^2w\|_{L^1(\mathbb{C})}\le C\cdot 16\pi e^{2\pi}\delta^2.
\end{align*}
As a result, we know 
\begin{align*}
|f(x)-f(y)|\le d_g(f(x),f(y))\le \int_x^ye^{w}\le (1+C \delta^2)|x-y|.
\end{align*}
For the lower bound, we first prove (\ref{approximate}), i.e.,   $f$ is well-approximated by orthogonal immersion.
To do this, we consider $f_{r,y}(z)=\frac{1}{r}[f(rz)-f(y)]$, where $r>0$ and $z=\frac{x-y}{r}$. Then, we calculate 
\begin{align*}
f_{r,y}(0)=0,  \quad \quad Df_{r,y}(z)=Df(rz+y) \quad \text{ and } \quad  D^2f_{r,y}(z)=rD^2f(rz+y),
\end{align*}
which implies 
\begin{align*}
f_{r,y}^*g_{\mathbb{R}^n}=e^{2w(rz+y)}g_{\mathbb{R}^2} \quad \text{ and } \|D^2f_{r,y}\|_{L^2(\mathbb{R}^2)}=\|D^2f\|_{L^2(\mathbb{R}^2)}\le C\delta.
\end{align*}
 
For any $R>0$, define $A_R:\mathbb{R}^2\to \mathbb{R}^n$ by $A_R(z):= \fint_{D_R}\nabla_zf_{r,y}(w)dw$, $g_R(z)=f_{r,y}(z)-A_R(z)$ and $h_R:=\nabla g_R $. Then $\fint_{D_R}h_R=0$ and $\nabla h_R=D^2f_{r,y}$ satisfies 
\begin{align*}
\int_{\mathbb{R}^2}|\nabla h_R|^2(z)dz\le C\delta^2.
\end{align*}
 Hence for $\forall p>2$, the Sobolev inequality implies 
\begin{align}\label{W1p estimate}
\big(\frac{1}{R^2}\int_{D_R}|h_R|^p(z)dz\big)^{\frac{1}{p}}\le C\delta.
\end{align}
By the Sobolev embedding $W^{1,p}\subset C^{\alpha=1-\frac{2}{p}}$, we know for any $w,z\in D_R$, there holds 
\begin{align*}
\frac{|g_R(w)-g_R(z)|}{|w-z|^\alpha}\le C\delta (\int_{D_R}|\nabla g_R|^p(z)dz)^{\frac{1}{p}}\le C\delta R^{1-\alpha}.
\end{align*}
Since $g_R(0)=0$, we know 
\begin{align*}
|f_{r,y}(z)-A_Rz|\le C\delta R^{1-\alpha}|z|^{\alpha}, \quad\forall z\in D_R.
\end{align*}
By \ref{W1p estimate} and Chebyshev's inequality, we know that $\Omega_R:=\{z\in D_R: \|\nabla f(z)-A_R\|\le 100C\delta\}$ has positive measure. Noting $\langle f_{r,y,i}(z),f_{r,y,j}(z)\rangle= e^{2w}(r z+y)\delta_{ij}$ for a.e. $z\in \mathbb{R}^2$, we can choose $z_0\in \Omega$ such that 
$\|A_R-\nabla f_{r,y}(z_0)\|\le 100 C\delta$ and $\nabla f_{r,y}(x_0)=(\eta_1,\eta_2)$ satisfies $\langle \eta_i,\eta_j\rangle =e^{2w(rz_0+y)}\delta_{ij}$.  Letting $P_R:\mathbb{R}^2\to \mathbb{R}^n$ be the linear map defined by $P_R(z)=e^{-w(r z_0+y)}(z^1\eta_1+z^2\eta_2)$, then $P_R$ is an orthogonal immersion and 
\begin{align*}\|A_R-P_R\|&\le \|A_R-\nabla f_{r,y}(z_0)\|+\|\nabla f_{r,y}(z_0)-e^{-w(rz_0+y)}\nabla f_{r,y}(z_0)\|\\
&\le C\delta+\sqrt{2}\|e^w\|_{C^{0}}\|e^{-w}-1\|_{C^0}\\
&\le C\delta.
\end{align*}
So, we know 
\begin{align}\label{orthogonal approximation}
|f_{r,y}(z)-P_R(z)|\le C\delta R^{1-\alpha}|z|^\alpha+C\delta |z|\le C\delta R^{1-\alpha}|z|^\alpha, \forall z\in D_R.
\end{align}
Now, can show that \eqref{approximate} holds for $P=P_1$.

More precisely, for $|z|\le 1$, it follows  directly from \eqref{orthogonal approximation} that 
$$|f_{r,y}(z)-P_R(z)|\le C\delta\le C\delta(1+|z|^2).$$
For $|z|>1$, letting $R_i=2^i$ and we know there exists $i_0\ge 0$ such that $R_{i_0}<|z|\le R_{i_0+1}$. By \eqref{orthogonal approximation}, we know that 
\begin{align*}
|P_{R_i}(z)-P_{R_{i+1}}(z)|\le 3C\delta R_i, \forall |z|\le R_i,
\end{align*}
which implies $\|P_{R_i}-P_{R_i}\|\le 3C\delta$ and hence
$$\|P_1-P_{i_0+1}\|\le\sum_{i=1}^{i_0}\|P_{R_i}-P_{R_{i+1}}\|\le C \log_{2}(|z|)\delta. $$ As a result, we get 
\begin{align*}
|f_{r,y}(z)-P_1(z)|&\le |f_{r,y}(z)-P_{R_{i_0+1}}(z)|+\|P_1-P_{R_{i_0+1}}\||z|\\
&\le C\delta R_{i_0+1}+C\delta \log_{2}(|z|)|z|\\
&\le C\delta(1+|z|^2).
\end{align*}
By the definition of $f_{r,y}$, we get 
\begin{align*}
|f(x)-f(y)-P(x-y)|\le C\delta(r+\frac{|x-y|^2}{r}).
\end{align*}
Finally, by choosing $r=|x-y|$, we get 
\begin{align*}
|f(x)-f(y)|\ge |P(x-y)|-2C\delta |x-y|\ge (1-C\delta)|x-y|,\quad \forall x\in \mathbb{R}^2.
\end{align*}
This finishes the proof of Theorem \ref{inverted quantitative rigidity}.
\end{proof}
\begin{remark}\label{vanishing of Gauss curvature} In the above quantitative bi-Lipschitz estimate \eqref{quantitative bilipschitz}, the upper bound $1+C\delta^2$  comes from the intrinsic estimate 
$\|w\|_{C^0}\le C\delta^2$ while the lower bound $1-C\delta$  comes from the extrinsic estimate $\|D^2f\|_{L^2}\le C\delta$. Though this lower bound is enough for our later application, we remark it can be improved to be  $1-C\delta^2$.  In fact, by
 the proof of  \cite[Theorem 4.3.1]{MS-1995}, the upper bound and lower bound can be interchanged through an inversion $\tilde{f}(z)=|f(\frac{1}{\bar{z}})|^{-2}f(\frac{1}{\bar{z}})$ and hence we  know 
$$|f(x)-f(y)|\ge (1-C\delta^2)|x-y|.$$
To apply \cite[Theorem 4.3.1]{MS-1995}(see also \cite[Theorem 5.1]{Sch13}), we check the assumption $\int_{\tilde{\Sigma}}Kd\tilde{\mu}=0$ as following.    By \cite[Remark 2.1]{KL-2012}, the Gauss curvature $K=\frac{1}{2}(|\tilde{H}|^2-|\tilde{A}|^2)$ of a $W^{2,2}_{loc}$ conformal immersion satisfies the Liouville equation 
\begin{align*}
-\Delta w=Ke^{2w}\in L^1. 
\end{align*}
Noting $|Dw|^2\le |D^2f|^2e^{-2w}$ and $\|w\|_{C^0}\le \psi$, we know $\int_{\mathbb{C}}|Dw|^2\le C \int_{\mathbb{C}}|D^2f|^2\le \psi$.
So, by taking logarithm cut-off function $\eta_R$ and applying dominating convergence theorem, we know 
\begin{align*}
|\int_{\mathbb{C}}Ke^{2w}|=|\lim_{R\to \infty}\int_{\mathbb{C}}\nabla w\nabla \eta_R|\le (\psi\lim_{R\to \infty}\int_{\mathbb{C}}|\nabla \eta_R|^2)^{\frac{1}{2}}=0.
\end{align*}
This also implies $\int_{\tilde{\Sigma}}|\tilde{A}|^2d\tilde{\mu}=\int_{\tilde{\Sigma}}|\tilde{H}|^2d\tilde{\mu}\le 16\pi\delta^2$.
\end{remark}

\section{Proof of Theorem \ref{main theorem}}\label{sec:Quantitative sphere}
Now, we are prepared to prove Theorem \ref{main theorem}. Actually,  by Theorem \ref{inverted quantitative rigidity}, we know the varifold $V$ in Theorem \ref{main theorem} is  a topological sphere with $W^{2,2}$ conformal paremetrization. Hence, one can adapt the argument of \cite{LS-2014}  to prove Theorem \ref{main theorem}. However, since we have obtained the sharp quantitative estimates\eqref{quantitative C^0} \eqref{quantitative bilipschitz} for the inverted varifold, it is more direct to invert this inverted varifold back, and show the quantitative rigidity of $V$.  

\begin{theorem}\label{partial Quantitative ridigity}
Assume $V=\underline{v}(\Sigma,\theta)$ is an integral varifold in $\mathbb{R}^n$  with generalized mean curvature $H\in L^2(d\mu_V)$, $\theta\ge 1$ and satisfying 
\begin{align*}
\mu_V(\mathbb{R}^n)=4\pi\quad \quad \text{ and } \int_{\mathbb{R^n}}|H|^2d\mu_V\le 16\pi(1+\delta^2).
\end{align*}
Then, for $\delta<\delta_0\ll 1$, there exists a round sphere $\mathbb{S}^2$ with radius equal to one and centered at $0\in \mathbb{R}^n$ such that after translation, there exists a conformal parameterization $\Phi: \mathbb{S}^2\to \Sigma$ such that $d\Phi\otimes d\Phi=e^{2v}g_{\mathbb{S}^2}$ and 
\begin{align}\label{C0 estimate}
\sup_{q\in \mathbb{S}^2}|\Phi(q)-q|+\|v\|_{C^0(\mathbb{S}^2)}\le C\delta 
\end{align}
\end{theorem}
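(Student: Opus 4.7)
My plan is to reduce Theorem \ref{partial Quantitative ridigity} to Theorem \ref{inverted quantitative rigidity} by inverting $V$ at a well-chosen point and lifting the resulting planar parameterization of $\tilde V$ to a parameterization of $\Sigma$ by $\mathbb{S}^2$; the classical identification of a sphere with a plane by inversion makes the construction transparent, and the area constraint $\mu(\mathbb{R}^n)=4\pi$ pins down the radius of the approximating sphere. Since $\Sigma$ is compact by Lemma \ref{compactness}, I fix $p\in\Sigma$ and translate $V$ so that $p=0$. Corollary \ref{Critical Allard condition for the inverted varifold} produces the integral inverted varifold $\tilde V=f_{0\sharp}V$ satisfying $\int_{\mathbb{R}^n}|\tilde H|^2\,d\tilde\mu\le 16\pi\delta^2$ and $\Theta(\tilde\mu,\infty)\le 1+\delta^2$. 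Pick an auxiliary point $\tilde q_\ast\in\tilde\Sigma$, translate $\tilde\Sigma$ by $-\tilde q_\ast$, and apply Theorem \ref{inverted quantitative rigidity} to the translated varifold. This yields a $W^{2,2}_{loc}$ conformal parameterization $f:\mathbb{R}^2\to\tilde\Sigma-\tilde q_\ast$ with $f(0)=0$, conformal factor $e^{2w}$ satisfying $\|w\|_{C^0}\le C\delta$, and the affine approximation $|f(x)-Px|\le C\delta(1+|x|^2)$ from \eqref{approximate} for some orthogonal $P:\mathbb{R}^2\to\mathbb{R}^n$.

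Next I would construct $\Phi$ as a composition. Let $\sigma:\mathbb{S}^2\setminus\{N\}\to\mathbb{R}^2$ be the inversion-based stereographic projection obtained by restricting $f_0$ to $\mathbb{S}^2$ and identifying its image (the plane through $\tilde q_\ast+P(\mathbb{R}^2)$) with $\mathbb{R}^2$; this is the unique conformal identification that cancels with $f_0^{-1}$ in the composition. Define
\begin{align*}
\Phi(q) := f_0^{-1}\bigl(\tau(f(\sigma(q)))\bigr) \text{ for } q\in\mathbb{S}^2\setminus\{N\},\qquad \Phi(N) := 0,
\end{align*}
where $\tau$ is translation by $+\tilde q_\ast$. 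Each factor is conformal, so $\Phi$ is a conformal parameterization of $\Sigma$ by $\mathbb{S}^2$, and the quadratic control of $f$ at infinity from \eqref{approximate} ensures continuous extension to $N$. In the limit $\delta=0$, $\Phi$ is the identity on $\mathbb{S}^2$ by construction: $\sigma$ and $f_0^{-1}$ are inverse conformal maps between the unit sphere and the plane, and $f=P$ is a linear isometry that, together with $\tau$, identifies the plane with its model image.

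Quantitative estimates then follow. Combining the affine approximation \eqref{approximate} with the smooth composition $f_0^{-1}\circ\tau\circ(\cdot)\circ\sigma$ yields $|\Phi(q)-q'|\le C\delta$ for every $q\in\mathbb{S}^2$, where $q'$ denotes the limit position of $\Phi(q)$ on the approximating sphere $S\subset\mathbb{R}^n$. The conformal factor $e^{2v}$ of $\Phi$ decomposes as the product of the conformal factors of $\sigma$, $e^{2w}$, and $f_0^{-1}$; the first and third factors cancel to $1$ in the limit by construction of $\sigma$, leaving $e^{2v}=e^{2w}+O(\delta)$, hence $\|v\|_{C^0}\le C\delta$. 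Finally, the area constraint $\mu(\mathbb{R}^n)=4\pi$ combined with the Jacobian identity of Lemma \ref{inverted lemma} forces $\dist(0,\tilde q_\ast+P(\mathbb{R}^2))=\tfrac12$, which makes $S$ a round sphere of radius $1$ through the origin; the allowed translation of $V$ by $-\mathrm{center}(S)$ then identifies $S$ with the standard unit sphere at $0$, giving $q'=q$ and the desired $|\Phi(q)-q|\le C\delta$.

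The main obstacle is the coordination of the three free data—the base point $\tilde q_\ast$, the axis used to define $\sigma$, and the final translation of $V$—so that the limiting approximating sphere is exactly the unit sphere at the origin and $\Phi$ is exactly the identity in the limit $\delta=0$. The area condition $\mu(\mathbb{R}^n)=4\pi$ supplies the single scalar relation that pins down the radius, and the intrinsic normalization $v(\infty)=1$ from the proof of Theorem \ref{inverted quantitative rigidity} fixes the scale of $f$. The technical bottleneck is the behavior at $N\in\mathbb{S}^2$, which corresponds to $\infty\in\mathbb{R}^2$: the conformal factors of $\sigma$ and $f_0^{-1}$ individually blow up there, and only the far-field asymptotic control of $f$ from \eqref{approximate} iterated across dyadic scales, exactly as in the proof of Theorem \ref{inverted quantitative rigidity}, keeps $v$ bounded and $\Phi$ continuous at $N$.
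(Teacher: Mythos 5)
Your approach is essentially the paper's: invert $V$ at a point of $\Sigma$, use Corollary \ref{Critical Allard condition for the inverted varifold} to land in the critical-Allard regime, apply Theorem \ref{inverted quantitative rigidity} after a translation to get a planar parameterization $f$ with the affine approximation \eqref{approximate}, then conjugate $f$ by inversions to build $\Phi$ and pin down the radius from the area constraint. The overall architecture and the correct identification of the technical bottlenecks (coordinating the three free pieces of data; the behavior of the conformal factor at $N$) match the paper.

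However, you leave a genuine gap precisely at the point you flag as "the main obstacle," and it is not a detail one can wave away. You invert at an unspecified $p\in\Sigma$ and then "pick an auxiliary point $\tilde q_\ast\in\tilde\Sigma$." For the construction to work you need two things simultaneously: (i) $|\tilde q_\ast|$ must be bounded above by a universal constant, so that the post-inversion translation is uniformly controlled and the far-field quadratic error in \eqref{approximate} does not destroy the approximation on the bounded region near the inversion center; and (ii) the approximating plane $\tilde q_\ast + P(\mathbb{R}^2)$ must be bounded \emph{away} from the inversion center, since otherwise $\pi(\tilde q_\ast+P(\mathbb{R}^2))$ fails to be a bounded sphere. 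Neither is automatic for an arbitrary $\tilde q_\ast$. The paper secures both by choosing the inversion point $0\in\Sigma$ and $p_0\in\Sigma$ realizing $d(0,p_0)=\mathrm{diam}(\Sigma)$, taking $v=\pi(p_0)$ (the \emph{closest} point of $\tilde\Sigma$ to the inversion center, with $|v|=1/d\le 7$ by Lemma \ref{compactness}), and then proving $|P(x)+v|\ge 1/(2d)$ for all $x$ by combining $|v|\le 1/d$ with the approximation \eqref{approximate} on the bounded range $|x|\le 2/d$; this is the estimate \eqref{away from zero} and it is not derivable from the area constraint. Your claim that "$\mu(\mathbb{R}^n)=4\pi$ \ldots forces $\dist(0,\tilde q_\ast+P(\mathbb{R}^2))=\tfrac12$" has the logic reversed: only after showing the plane is at uniformly bounded positive distance from $0$ (so the image sphere $S_r$ has $r$ comparable to $1$) can one use the area identity and the conformal-factor bound to conclude $|r-1|\le C\delta$ and rescale.

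A secondary confusion worth fixing: your map $\sigma$ is described as "restricting $f_0$ to $\mathbb{S}^2$," but the unit sphere centered at the origin is preserved by the inversion $f_0$, not sent to a plane. The sphere that $f_0$ sends to the plane $\tilde q_\ast+P(\mathbb{R}^2)$ is $S_r=\pi(\tilde q_\ast+P(\mathbb{R}^2))$, which passes through the inversion center $0$ and has radius $r$ \emph{a priori} unknown; the paper constructs $\Phi$ on $S_r$, proves $|\Phi(q)-q|\le C\delta$ and $\|v\|_{C^0}\le C\delta$ there by the explicit computation
\begin{align*}
e^{2v(q)}=\frac{|A(x)|^4\,e^{2w(x)}}{|\tilde f(x)|^4},\qquad x=A^{-1}(\pi(q)),
\end{align*}
and only afterwards rescales and translates $S_r$ to the unit sphere at the origin. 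You should keep this two-step structure explicit rather than identifying the two spheres from the start; without it the conformal-factor estimate at $N$ cannot even be set up, since the Jacobians of $\pi$ on either side are position-dependent and do not cancel termwise as you suggest.
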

\begin{proof}
By Lemma \ref{compactness}, we know
\begin{align}\label{diameter estimate}
\frac{1}{7}\le d:=\text{diam}(\Sigma)\le C(n)<\infty.
\end{align}
By translation, we can assume $0\in \textup{spt }\mu_V$ and there exists $p_0\in\Sigma$ such that $d(0,p_0)=d$.  Then, we know $d(p,0)\le d$ holds for any $p\in \Sigma$. Consider the inversion $\pi: \mathbb{R}^n\to \mathbb{R}^n$ defined by $\pi(p)=\frac{p}{|p|^2}$ and the inverted varifold $\tilde{V}=\pi_{\sharp}V=\underline{v}(\tilde{\Sigma},\tilde{\theta})$. Then, Corollary \ref{Critical Allard condition for the inverted varifold} implies 
\begin{align*}
\int_{\mathbb{R}^n}|\tilde{H}|^2d\tilde{\mu}\le 16\pi\delta^2\quad \quad  \text{ and } \quad \quad 
\Theta(\tilde{\mu},\infty)\le 1+\delta^2.
\end{align*}
Denote $v=\pi(p_0)=\frac{p_0}{|p_0|^2}$. Then, for any $\tilde{p}\in \tilde{\Sigma}$, we have  

\begin{align}\label{distance lower bound}
|\tilde{p}|\ge |v|=\frac{1}{d}\ge \frac{1}{C}.
\end{align}
Define the translation  $\tau_v(\cdot )=\cdot -v$ and denote $\hat{V}=\tau_{v\sharp}\tilde{V}=\underline{v}(\hat{\Sigma},\hat{\theta})$. Then, $\hat{V}$ is a varifold passing through the origin with the Critical Allard condition holds. Thus by Theorem \ref{inverted quantitative rigidity}, we know there exists a conformal immersion $f\in W^{2,2}_{loc}(\mathbb{R}^2, \hat{\Sigma})$ such that $f(0)=0$, $df\otimes df=e^{2w(x)}(dx^1\otimes dx^1+dx^2\otimes dx^2)$ with 
\begin{align}\label{intrinsic estimate}
\|w\|_{C^0(\mathbb{R}^2)}+\|Dw\|_{L^2(\mathbb{R}^2)}+\|D^2w\|_{L^1(\mathbb{R}^2)}+\|D^2f\|^2_{L^2(\mathbb{R}^2)}+\int_{\hat{\Sigma}}|\hat{A}|^2d\tilde{\mu}\le C \delta^2,
\end{align}
and
\begin{align}\label{bi-Lipschitz}
(1-C\delta)|x-y|\le |f(x)-f(y)|\le (1+C\delta^2)|x-y|,\forall x,y\in \mathbb{R}^2.
\end{align}
%Next, we are going to show $f$ is well-approximated by orthogonal immersion.  
Moreover, there exists $P\in Hom(\mathbb{R}^2,\mathbb{R}^n)$ such that $P^{*}P=id_{\mathbb{R}^2}$ and 
\begin{align*}
|f(x)-P(x)|\le C\delta(1+|x|^2), \quad \quad \forall x\in \mathbb{R}^2.
\end{align*}

This means the support $\tilde{\Sigma}$ of the inverted varifold is close to the plane $P(\mathbb{R}^2)+v$. To show how far is the plane away from the inverted point $0\in \mathbb{R}^n$, we consider the point $x_0\in \mathbb{R}^2$  such that  
$$|P(x_0)+v|=\min_{x\in \mathbb{R}^2} |P(x)+v|,$$
denote $v'=P(x_0)+v$. Then we know $d(0,v')=d(0, P(\mathbb{R}^2)+v)$ and hence 
 \begin{align}\label{perpendicular}
 v'\bot P(\mathbb{R}^2).
 \end{align}
We will use the inversion of  this plane to construct a sphere and a map from this sphere to $\mathbb{R}^n$ which approximate the support $\Sigma$ of $V$. To do this, we need to show $P(\mathbb{R}^2)+v$ does not pass through the origin.   More precisely,  by \eqref{distance lower bound} and noting $|P(x)|=|x|$, we know 

\begin{align*}
|P(x)+v|\ge |x|-|v|\ge |x|-\frac{1}{d}.
\end{align*}
So for $|x|\ge \frac{2}{d}$, we know $|P(x)+v|\ge \frac{1}{d}$. For $|x|<\frac{2}{d}$,   (\ref{diameter estimate}) implies $|x|\le 14$. Thus,  \eqref{approximate} and \eqref{distance lower bound} implies
\begin{align*}
|P(x)+v|\ge |f(x)+v|-C\delta(1+|x|^2)\ge \frac{1}{d}-C\delta(1+|x|^2)\ge \frac{1}{d}-200C\delta,
\end{align*}
 and hence for $\delta\le \delta_0=\delta_0(C)\ll 1$, we know $|P(x)+v|\ge \frac{1}{2d}, \forall |x|\le \frac{2}{d}$. In conclusion, we know 
\begin{align}\label{away from zero}
|P(x)+v|\ge \frac{1}{2d}\ge \frac{1}{C}, \forall x\in \mathbb{R}^2.
\end{align}
 This means $P(\mathbb{R}^2)+v$ is away from $0$ and hence $S_r=\pi(P(\mathbb{R}^2)+v)$ is a sphere passing through the origin with diameter $2r=\text{diam}(S_r)=|\pi(v')|=\frac{1}{|v'|}\le 2d$.  

 Next, we will construct a conformal parameterization from $S_r$ to $\tilde{\Sigma}$.  Before this, we denote  $A(x)=P(x+x_0)+v$ and $\tilde{f}(x)=f(x+x_0)+v$. Then, $\tilde{f}(\mathbb{R}^2)=f(\mathbb{R}^2)+v= \tilde{\Sigma}$, i.e., $\tilde{f}$ is also a parameterization of $\tilde{\Sigma}$. By \eqref{bi-Lipschitz}, we know  
\begin{align}\label{tilde bi-Lipschitz}
(1-C\delta)|x-y|\le |\tilde{f}(x)-\tilde{f}(y)|\le (1+C\delta^2)|x-y|.
\end{align}
Moreover, by \eqref{approximate}, we know 
\begin{align*}
|\tilde{f}(x)-A(x)|=|P(x+x_0)-f(x+x_0)|\le C\delta (1+|x_0+x|^2)\le C\delta(1+|x_0|^2+|x|^2).
\end{align*}
Noting that $P$ is an orthogonal immersion, by Lemma \ref{compactness}, we know 
\begin{align*}
|x_0|=|P(x_0)|\le |P(x_0)+v|+|v|\le |P(0)+v|+|v|=\frac{2}{\text{diam}(\Sigma)}\le 14,
\end{align*}
and hence 
\begin{align}\label{translated approximation}
|\tilde{f}(x)-A(x)|\le C\delta(15+|x|^2)\le C\delta(1+|x|^2).
\end{align}
Now, define $\Phi:S_r\to \Sigma$ by 
$$\Phi(q)=\pi^{-1}(\tilde{f}(A^{-1}(\pi(q)))),\forall q\in S_r.$$
For $q\in S_r$, we know  $\tilde{q}=\pi(q)=\frac{q}{|q|^2}\in A(\mathbb{R}^2)$ and hence $x=A^{-1}(\tilde{q})\in \mathbb{R}^2$. So, we know $\tilde{p}=\tilde{f}(x)\in \tilde{\Sigma}$ and hence $p=\pi^{-1}(\tilde{q})=\frac{\tilde{q}}{|\tilde{q}|^2}\in \Sigma$. Noting that each of the maps are conformal when restricted to their domain, we get a  conformal parameterization from $S^2$ to $\Sigma$. 
Next, we are going to estimate the $L^\infty$ norm and conformal factor of $\Phi.$  Take notations as above, then 
\begin{align*}
|\Phi(q)-q|&=|\pi^{-1}(\tilde{f}(x))-\pi^{-1}(A(x))|\\
&=\big|\frac{\tilde{f}(x)}{|\tilde{f}(x)|^2}-\frac{A(x)}{|A(x)|^2}\big|\\
&\le \frac{|\tilde{f}(x)-A(x)|}{|\tilde{f}(x)|^2}+\frac{||A(x)|^2-|\tilde{f}(x)|^2|}{|\tilde{f}(x)|^2|A(x)|}\\
&\le \frac{2|\tilde{f}(x)-A(x)|}{|\tilde{f}(x)|^2}+\frac{|\tilde{f}(x)-A(x)|}{|\tilde{f}(x)| |A(x)|}.
\end{align*}
Noting that $A(x)\in P(\mathbb{R}^2)+v$ and \eqref{away from zero}  implies  $|A(x)|\ge \frac{1}{2d}$, by  $|A(x)-A(0)|=|P(x)|=|x|$, $|A(0)|=|v'|\le |v|\le \frac{1}{d}\le 7$ , we know 
\begin{align*}
\max\{\frac{1}{2d},|x|-7\}\le |A(x)|\le |x|+7.
\end{align*}
Similarly,  $\tilde{f}(x)\in \tilde{\Sigma}$ and \eqref{distance lower bound} implies $|\tilde{f}(x)|\ge \frac{1}{d}$, by 
$
|\tilde{f}(0)-A(0)|+|A(0)|\le C\delta+7\le 8
$ and \eqref{tilde bi-Lipschitz}, 
we know 
\begin{align*}
\max\{\frac{1}{d},(1-C\delta)|x|-8\}\le |\tilde{f}(x)|\le (1+C\delta^2)|x|+8
\end{align*}
So, we know $|\tilde{f}(x)|\ge \frac{1}{C}(1+|x|)$ and $|A(x)|\ge \frac{1}{C}(1+|x|)$, which combining with \eqref{translated approximation} implies
\begin{align} \label{pointwise estimate}
|\Phi(q)-q|&\le  \frac{2|\tilde{f}(x)-A(x)|}{|\tilde{f}(x)|^2}+\frac{|\tilde{f}(x)-A(x)|}{|\tilde{f}(x)| |A(x)|}\le \frac{C\delta(1+|x|^2)}{\frac{1}{C}(1+|x|)^2}\le C\delta.
\end{align}
Moreover, since $\Phi: S_r\to \Sigma$ is conformal, we can set $d\Phi\otimes d\Phi=e^{2v}g_{S_r}$ and calculate by \eqref{differential of inversion} and the orthogonality of $dA=P$ to get 
\begin{align*}
e^{2v(q)}=\frac{e^{2w(x)}}{|\tilde{p}|^4|q|^4}=\frac{|A(x)|^4e^{2w(x)}}{|\tilde{f}(x)|^4}. 
\end{align*}
Noting that $|A(x)|^2=|P(x+x_0)+v|^2=|P(x)+v'|^2$, by \eqref{perpendicular}, we know 
$$|A(x)|=|P(x)|^2+|v'|^2=|x|^2+|v'|^2.$$
On the other hand,  by $v'=P(x_0)+v$, we know $\tilde{f}(x)=f(x+x_0)+v=f(x+x_0)-P(x_0)+v'$ and hence 
\begin{align}\label{tilde f estimate}
|\tilde{f}(x)|^2=\underbrace{|f(x+x_0)-P(x_0)|^2}_{I}+\underbrace{2\langle f(x+x_0)-P(x_0),v'\rangle}_{II} +|v'|^2.
\end{align}
Again \eqref{perpendicular}, \eqref{orthogonal approximation} and $\frac{1}{C}\le \frac{1}{2d}\le |v'|\le 7$, we know 
\begin{align}\label{II estimate}
|II|=|\langle f(x+x_0)- P(x_0+x),v'\rangle|\le C\delta(1+|x|^2) |v'|\le C\delta(|v'|^2+|x|^2).
\end{align}
Noting 
\begin{align*}
I&=|f(x+x_0)-P(x_0)|^2\\
&=|f(x+x_0)-f(x_0)|^2+|f(x_0)-P(x_0)|^2+2\langle f(x+x_0)-f(x_0),f(x_0)-P(x_0)\rangle,
\end{align*}
where 
\begin{align*}
(1-C\delta)|x|^2\le |f(x+x_0)-f(x_0)|^2\le (1+C\delta^2)|x|^2,
\end{align*}
\begin{align*}
|f(x_0)-P(x_0)|^2\le [C\delta (1+|x_0|)]^2\le (15 C\delta)^2\le C\delta^2,
\end{align*}
and 
\begin{align*}
|2\langle f(x+x_0)-f(x_0),f(x_0)-P(x_0)\rangle|\le 2(1+C\delta^2)|x|\cdot C\delta(1+|x_0|)\le C\delta(|v'|^2+|x|^2),
\end{align*}
we know that 
\begin{align}\label{I estimate}
I\le (1+C\delta^2)|x|^2+C\delta^2+C\delta(|v'|^2+|x|^2)\le (1+C\delta)|x|^2+C\delta|v'|^2 ,
\end{align}
and similarly there holds
\begin{align}\label{I lowerbound estimate}
I\ge (1-C\delta)|x|^2-C\delta |v'|^2. 
\end{align}
Substituting \eqref{I estimate}, \eqref{I lowerbound estimate} and \eqref{II estimate} into \eqref{tilde f estimate}, we get 
\begin{align*}
(1-C\delta )(|x|^2+|v'|^2)\le |\tilde{f}(x)|^2\le (1+C\delta)(|x|^2+|v'^2|).
\end{align*}
So, by $|A(x)|^2=|x|^2+|v'|^2$, we know 
\begin{align}\label{inverted C0 estimate}
1-C\delta\le \frac{|\tilde{f}(x)|^2}{|A(x)|^2}\le 1+C\delta,
\end{align}
and hence by \eqref{intrinsic estimate}, we get 
\begin{align}\label{conformal factor estimate}
|v(q)|=\frac{1}{2}|\log{\frac{|\tilde{f}(x)|^2}{|A(x)|^2}}|+|w(x)|\le C\delta+C\delta^2\le C\delta.
\end{align}
This implies the intrinsic distance between $S_r$ and $\Sigma$ is bi-Lipschitz, i.e,, 
$$1-C\delta\le \frac{d_{\Sigma}(\Phi(q_1),\Phi(q_1))}{d_{S_r}(q_1,q_2)}\le 1+C\delta.$$
Next, we show the restriction distance of $d_{\mathbb{R}^n}$ on $S_r$ and $\Sigma$ are also bi-Lipschitz under the map $\Phi$. More precisely, for any $q_1,q_2\in S_r$, letting $x_i=A^{-1}(\pi(q_i))$ and $p_i=\Phi(q_i)$ for $i=1,2$. Then, we know $p_i=\frac{\tilde{f}(x_i)}{|\tilde{f}(x_i)|^2}$ and $q_i=\frac{A(x_i)}{|A(x_i)|^2}$. So, we know 
\begin{align*}
|q_1-q_2|^2&=|\frac{A(x_1)}{|A(x_1)|^2}-\frac{A(x_2)}{|A(x_2)|^2}|^2=\frac{1}{|A(x_1)|^2}+\frac{1}{|A(x_2)|^2}-\frac{2\langle A(x_1), A(x_2)\rangle }{|A(x_1)|^2|A(x_2)|^2}\\
          &=\frac{|A(x_1)-A(x_2)|^2}{|A(x_1)|^2|A(x_2)|^2}=\frac{|x_1-x_2|^2}{|A(x_1)|^2|A(x_2)|^2}.
\end{align*}
Similarly, there holds 
\begin{align*}
|p_1-p_2|^2=\frac{|\tilde{f}(x_1)-\tilde{f}(x_2)|^2}{|\tilde{f}(x_1)|^2|\tilde{f}(x_2)|^2}.
\end{align*}
Thus by \eqref{tilde bi-Lipschitz} and \eqref{inverted C0 estimate}, we know 
\begin{align*}
\frac{|p_1-p_2|^2}{|q_1-q_2|^2}=\frac{|A(x_1)|^2|A(x_2)|^2}{|\tilde{f}(x_1)|^2|\tilde{f}(x_2)|^2}\frac{|\tilde{f}(x_1)-\tilde{f}(x_2)|^2}{|x_1-x_2|^2}\in [(1-C\delta)^2(1-C\delta)^2,(1+C\delta)^2(1+C\delta^2)^2],
\end{align*}
which means 
\begin{align}\label{extrinsic bi-Lipschitz}
1-C\delta\le \frac{|\Phi(q_1)-\Phi(q_2)|}{|q_1-q_2|}\le 1+C\delta.
\end{align}
Finally, by $\mu(R^n)=\int_{\Sigma}\Theta(\mu,x)d\mathcal{H}^2=4\pi$, $\mathcal{H}^2(\Sigma)=\int_{S_r}e^{2v}d\mathcal{H}^2$ and noting Lemma  \ref{Willmore lower bound} implies $1\le \Theta(\mu,x)\le 1+\delta^2$, we know that
\begin{align*}
\frac{4\pi}{1+\delta^2}\le \mathcal{H}^2(\Sigma)\le 4\pi \quad \quad \text{ and } \quad \quad e^{-C\delta} 4\pi r^2\le \mathcal{H}^2(\Sigma)\le e^{C\delta}4\pi r^2, 
\end{align*}
which implies 
\begin{align*}
1-C\delta\le r\le 1+C\delta.
\end{align*}
So, after scaling and translation, we can replace $S_r$ by $\mathbb{S}^2=\frac{1}{r}(S_r-\frac{v'}{2|v'|^2})-\vec{c}$(where $\vec{c}$ is the mass center of $\frac{1}{r}(S_r-\frac{v'}{2|v'|^2})$ in $\mathbb{R}^n$), replace $\Phi$ by $\hat{\Phi}(\cdot)=\Phi(r\cdot+\frac{v'}{2|v'|^2})-\frac{v'}{2|v'|^2}-\vec{c}$, and translate $V=\underline{v}(\Sigma,\theta)$ by $\tau(\cdot)=\cdot-\frac{v'}{2|v'|^2}-\vec{c}$, then the estimates \eqref{pointwise estimate}, \eqref{conformal factor estimate} and \eqref{extrinsic bi-Lipschitz} still hold for $\hat{\Phi}:\mathbb{S}^2\to \Sigma-\frac{v'}{2|v'|^2}-\vec{c}$. 

\end{proof}

Now, we can apply the argument of Lamm and Sch\"atzle\cite{LS-2014} to get the $W^{2,2}$ estimate. For completeness, we write down the details. Instead of using the fact $V$ is a rectifiable varifold  underlying an integral current, we use a topological argument to show the projection map $\text{Proj}:\Sigma\to S^2$ is surjective. 

\begin{corollary}
Under the conclusion of Theorem \ref{partial Quantitative ridigity}, there holds 
\begin{align*}
\int_{\mathbb{S}^2}|\Delta\Phi+2\Phi|^2d\mathcal{H}^2\le C\delta^2
\end{align*}
and 
\begin{align}\label{W^{2,2} estimate}
\|\Phi-id_{\mathbb{S}^2}\|_{W^{2,2}(\mathbb{S}^2)}\le C\delta.
\end{align}
\end{corollary}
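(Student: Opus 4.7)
The plan is to adapt the Lamm--Sch\"atzle~\cite{LS-2014} strategy to the present $W^{2,2}$-parametrized setting: first derive the $L^2$ bound $\int|\Delta\Phi+2\Phi|^2\leq C\delta^2$, then recover the $W^{2,2}$ estimate via Bochner and elliptic regularity on $\mathbb{S}^2$.

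As a first step I would show $\theta\equiv 1$ a.e. For $\delta<\delta_0<1$, Lemma \ref{Willmore lower bound} gives $\Theta(\mu,\cdot)\leq\tfrac{1}{16\pi}\int|H|^2\,d\mu\leq 1+\delta^2<2$, and integer-valuedness of $\theta$ forces $\theta\equiv 1$, so $d\mu=d\mathcal{H}^2\llcorner\Sigma$ and $\mathcal{H}^2(\Sigma)=4\pi$. Next, the Liouville equation $K_g\,e^{2v}=1-\Delta_{\mathbb{S}^2}v$ for the $W^{2,2}_{\mathrm{conf}}$ immersion $\Phi$ (as in Remark \ref{vanishing of Gauss curvature}), integrated against $1$ and using $\int_{\mathbb{S}^2}\Delta v=0$, yields Gauss--Bonnet $\int_\Sigma K\,d\mathcal{H}^2=4\pi$; combined with the Gauss identity $|\mathring{A}|^2=\tfrac12|H|^2-2K$ this produces
\[\int_\Sigma |\mathring{A}|^2\,d\mathcal{H}^2=\tfrac12\int|H|^2\,d\mu-8\pi\leq 8\pi\delta^2.\]

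The heart of the argument is to derive, with $u=\Phi-\text{id}_{\mathbb{S}^2}$, the integral identity on $\mathbb{S}^2$
\[\int|\Delta\Phi+2\Phi|^2+2\int|\nabla u|^2=2\int e^{4v}|\mathring{A}|^2+8\int e^{2v}|\nabla v|^2+4\int|u|^2,\]
obtained from three ingredients: (a) the change-of-variable/Liouville computation $\int e^{4v}|H|^2=2\int e^{4v}|\mathring{A}|^2+16\pi+8\int e^{2v}|\nabla v|^2$, where the last term arises from $\int e^{2v}K\,d\mathcal{H}^2_\Sigma=4\pi+2\int e^{2v}|\nabla v|^2$ via integration by parts against Liouville; (b) the first-variation identity $\int_\Sigma H\cdot p\,d\mu=-8\pi$ with test field $X(x)=x$; and (c) the Green-type identity $\int_{\mathbb{S}^2}\text{id}\cdot u=-\tfrac14\int|\nabla u|^2$, which follows from $\int|\nabla\Phi|^2=2\mathcal{H}^2(\Sigma)=8\pi$ and $|\nabla\text{id}|^2=2$. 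Each RHS term can then be bounded by $C\delta^2$: the $\mathring{A}$-term using the previous step; $\int|u|^2\leq\|u\|_{C^0}^2\mathcal{H}^2(\mathbb{S}^2)\leq C\delta^2$ by \eqref{C0 estimate}; and $\int|\nabla v|^2\leq C\delta^2$, which by 2D conformal invariance of the Dirichlet integral reduces to the gradient estimate $\|\nabla w\|_{L^2(\mathbb{R}^2)}^2\leq C\delta^4$ of \eqref{quantitative C^0} together with control of the log term in the explicit formula $v=\tfrac12\log(|\tilde{f}|^2/|A|^2)+w$ from the proof of Theorem \ref{partial Quantitative ridigity}. I expect this last estimate to be the main technical obstacle, since naive $C^0$ bounds on $v$ only yield $O(\delta)$ control.

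Finally, from $\int|\Delta\Phi+2\Phi|^2\leq C\delta^2$ and $\int|\nabla u|^2\leq C\delta^2$, the $W^{2,2}$ bound follows via the Bochner identity on $\mathbb{S}^2$ (with $\mathrm{Ric}=g_{\mathbb{S}^2}$),
\[\int|\nabla^2 u|^2=\int(\Delta u)^2-\int|\nabla u|^2\leq\|\Delta u\|_{L^2}^2,\]
together with $\Delta u=(\Delta\Phi+2\Phi)-2u$ giving $\|\Delta u\|_{L^2}\leq C\delta$; combined with $\|u\|_{L^2},\|\nabla u\|_{L^2}\leq C\delta$, this yields $\|\Phi-\text{id}_{\mathbb{S}^2}\|_{W^{2,2}(\mathbb{S}^2)}\leq C\delta$.
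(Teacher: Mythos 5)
Your proposed integral identity
\[
\int_{\mathbb{S}^2}|\Delta\Phi+2\Phi|^2+2\int_{\mathbb{S}^2}|\nabla u|^2=2\int_{\mathbb{S}^2} e^{4v}|\mathring{A}|^2+8\int_{\mathbb{S}^2} e^{2v}|\nabla v|^2+4\int_{\mathbb{S}^2}|u|^2
\]
is correct (I checked each of (a), (b), (c) and the combination), and it offers a genuinely different organization from the paper: you go through the Gauss equation, Gauss--Bonnet, and the Liouville equation, whereas the paper bounds $\|H+2p\|_{L^2(d\mu)}$ directly. However, the step you yourself flag as ``the main technical obstacle'' is a real gap, not a deferred detail. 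You need $\int_{\mathbb{S}^2}|\nabla v|^2\le C\delta^2$, and nothing in Theorem~\ref{partial Quantitative ridigity} or~\ref{inverted quantitative rigidity} gives this. The $C^0$ bound $\|v\|_{C^0}\le C\delta$ only yields $O(\delta)$ control after the obvious integration by parts: writing $\int|\nabla v|^2=-\int v(Ke^{2v}-1)$, the factor $\int|Ke^{2v}-1|$ is $O(1)$ (since $\int|K-1|\,d\mu$ has no sign and is only $O(1)$), so the product is merely $O(\delta)$. The explicit formula $v=\tfrac12\log(|\tilde f|^2/|A|^2)+w$ does not save you either: while $\|\nabla w\|_{L^2(\mathbb{R}^2)}^2\le C\delta^4$ is available, the log term involves $\nabla f-P$, and the proof of Theorem~\ref{inverted quantitative rigidity} only controls scale-invariant averages $\bigl(R^{-2}\int_{D_R}|\nabla f-A_R|^p\bigr)^{1/p}\le C\delta$ on each disk, which does not translate into a global $L^2(\mathbb{R}^2)$ bound on $\nabla\log(|\tilde f|^2/|A|^2)$. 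So the identity is true but, as derived from the paper's ingredients, the estimate does not close.

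The paper's argument is designed precisely to avoid this issue. It reduces $\|\Delta\Phi+2\Phi\|_{L^2}$ to $\|H+2p\|_{L^2(d\mu)}$ plus a term involving $\|e^{2v}-1\|_{C^0}$ (already $O(\delta)$), then expands
\[
\int_\Sigma|H+2p|^2\,d\mu=\int_\Sigma|H|^2\,d\mu+4\int_\Sigma|p|^2\,d\mu-32\pi,
\]
using the first-variation identity $\int_\Sigma H\cdot p\,d\mu=-8\pi$ with $X(x)=x$ (your ingredient (b)). The whole burden is then transferred to the single sharp upper bound $\int_\Sigma|p|^2\,d\mu\le 4\pi+C\delta^2$, which the paper gets by a topological argument: the radial projection $\mathrm{Proj}:\Sigma\to\mathbb{S}^2$ is surjective because $\mathrm{Proj}\circ\Phi$ is $C^0$-close to the identity and hence homotopic to it, and then the co-area formula together with $\Theta(\mu,\cdot)\ge 1$ yields $4\pi\le\int_\Sigma|u|^{-2}\,d\mu$ (decomposing $p=(u,v)\in V_1\oplus V_2$), which rearranges to the needed bound. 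This is the key idea your proposal is missing, and it entirely bypasses the $\int|\nabla v|^2$ estimate. Your final elliptic/Bochner step is fine and essentially equivalent to the paper's use of standard elliptic estimates, and your observation that $\theta\equiv 1$ is correct (the paper uses it implicitly).
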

\begin{proof}
Noting that the mean curvature equation  $\Delta \Phi=H\circ \Phi e^{2v}$ implies 
\begin{align}\label{reduction 1}
\|\Delta\Phi+2\Phi\|_{L^2(\mathbb{S}^2)}&\le \|H\circ \Phi+2\Phi\|_{L^2(\mathcal{S}^2)}+\big(\int_{\mathbb{S}^2}|H\circ \Phi|^2|e^{2v}-1|^2d\mathcal{H}^2\big)^{\frac{1}{2}}\nonumber \\
&\le \int_{\Sigma}|H|^2 d\mu)^{\frac{1}{2}}\sup{\frac{|e^{2v}-1|}{e^v\sqrt{\Theta(\mu,x)}}}+\|H(p)+2p\|_{L^2(d\mu)}\sup{\frac{1}{e^v\sqrt{\Theta(\mu,x)}}}\nonumber\\
&\le C(\delta+\|H(p)+2p\|_{L^2(d\mu)}),
\end{align}
we only need to estimate 
\begin{align}\label{reduction 2}
\int_{\Sigma}|H(p)+2p|^2d\mu&=\int_{\Sigma}|H|^2d\mu+4\int_\Sigma |p|^2d\mu+4\int_{\Sigma}\langle p, H(p)\rangle d\mu\nonumber\\
&\le 16\pi(1+\delta^2)+4\int_{\Sigma}|p|^2-4\int_{\Sigma}\underbrace{div^{\Sigma}p}_{=2}d\mu\nonumber\\
&=16\pi(1+\delta^2)+4\int_{\Sigma}|p|^2d\mu-32\pi\nonumber\\
&\le -16\pi+4\int_{\Sigma}|p|^2d\mu+16\pi\delta^2.
\end{align}
Now, follow the argument as in \cite{LS-2014}, denote $\mathbb{R}^n=V_1\oplus V_2$, where $V_1=\{\lambda q| q\in \mathbb{S}^2\}$ and $V_2=V_1^{\bot}$, and consider $\text{Proj}:B_{C\delta}(\mathbb{S}^2)\to \mathbb{S}^2$ defined by $\text{Proj}(u,v)=\frac{u}{|u|}$, where $(u,v)\in B_{C\delta}(\mathbb{S}^2)\cap (V_1\oplus V_2)$.  By \eqref{pointwise estimate}, we know $\Sigma\subset B_{C\delta}(\mathbb{S}^2)$ and $\varphi=\text{Proj}\circ \Phi: \mathbb{S}^2\to \Sigma\to \mathbb{S}^2$ satisfies 
\begin{align*}
|\varphi(q)-q|\le |\pi(\Phi(q))-\Phi(q)|+|\Phi(q)-q|=d(\Phi(q),\mathbb{S}^2)+|\Phi(q)-q|\le 2|\Phi(q)-q|\le C\delta.
\end{align*}
This implies $h(t,q)=t\varphi(q)+(1-t)q\neq 0$ for any $t\in [0,1]$ and $q\in \mathbb{S}^2$, and hence $H:[0,1]\times \mathbb{S}^2\to \mathbb{S}^2$ defined by $H(t,q)=\frac{h(t,q)}{|h(t,q)|}$ is a homotopy joining $\varphi:\mathbb{S}^2\to \mathbb{S}^2$ and $id_{\mathbb{S}^2}$.  This further implies $\text{Proj}:\Sigma\to \mathbb{S}^2$ is surjective and hence $\mathcal{H}^2(\text{Proj}(\Sigma))=4\pi.$  Moreover, since $D\text{Proj}=\frac{|u|^2I_3-u\otimes u}{|u|^3}$ is a matrix whose nonzero eigenvalue equals to $\frac{1}{|u|}$ with multiplicity two, we know $|D\text{Proj}|^2=\frac{2}{|u|^2}$. Thus by the co-area formula and noting $\Theta(\mu,x)\ge 1$, we know  
\begin{align*}
4\pi&=\mathcal{H}^2(\text{Proj}(\Sigma))\le \int_{\Sigma}J_{\Sigma}\text{Proj}(p)d\mathcal{H}^2(p))\\
&\le \int_{\Sigma}\frac{1}{2}|D\text{Proj}|^2(p)\Theta(\mu,p)d\mathcal{H}^2(p)\\
&\le \int_{\Sigma}\frac{1}{|u|^2}d\mu\\
&=\int_{\Sigma}2-|u|^2+\frac{(|u|^2-1)^2}{|u|^2}d\mu\\
&=\int_{\Sigma}2-|p|^2+|v|^2+\frac{(|u|^2-1)^2}{|u|^2}d\mu,
\end{align*}
and hence 
\begin{align*}
\int_{\Sigma}|p|^2d\mu\le 4\pi+4\pi\sup_{q=(u,v)\in \Sigma}(|v|^2+\frac{(|u|^2-1)^2}{|u|^2})
\end{align*}
Since $\Phi:\Sigma\to \mathbb{S}^2$ is surjective, we also know that there exists $q\in \Sigma$ such that $\Phi(q)=p$ and hence 
$$|\pi(p)-p|=d(\Phi(q),\mathbb{S}^2)\le |\Phi(q)-q|\le C\delta,$$
which means
\begin{align*}
C\delta^2\ge |\pi(p)-p|^2=|\frac{u}{|u|}-u|^2+|v|^2=(1-|u|)^2+|v|^2.
\end{align*}
So, we know $||u|-1|\le\delta $, $|v|^2\le C\delta^2$ and hence 
$$\int_{\Sigma}|p|^2d\mu\le 4\pi+ 4\pi[C\delta^2+\frac{(1+\delta)^2}{1-\delta}\delta^2]\le 4\pi+ C\delta^2.$$
Substitute this into \eqref{reduction 2} and \eqref{reduction 1}, we get 
\begin{align*}
\int_{\Sigma}|H(p)+2p|^2d\mu\le C\delta^2,
\end{align*}
and hence
\begin{align*}
\int_{\mathbb{S}^2}|\Delta \Phi+2\Phi|^2d\mathcal{H}^2\le C\delta^2.
\end{align*}
Finally, for the round sphere $\mathbb{S}^2$ constructed in Theorem \ref{partial Quantitative ridigity}, denoting $id_{\mathbb{S}^2}(q)=q$, then  $\Delta id_{\mathbb{S}^2}+2id_{\mathbb{S}^2}=0$ and (\ref{C0 estimate}) implies $\|\Phi-id_{\mathbb{S}^2}\|_{L^2(\mathbb{S}^2)}\le C\delta$. So, by standard elliptic estimate, we get 
$$\|\Phi-id_{\mathbb{S}^2}\|_{W^{2,2}(\mathbb{S}^2)}\le C(\|\Delta \Phi+2\Phi\|_{L^2(\mathbb{S}^2)}+\|\Phi-id_{\mathbb{S}2}\|_{L^2(\mathbb{S}^2)})\le C\delta.$$
This completes the proof. 
\end{proof}

\section{Sharp constant}\label{sec:sharp constant}
We end this paper by a discussion of the sharp constant  in $\mathbb{R}^3$.  Let us first recall the conception of $W^{2,2}$ conformal immersion\cite{KL-2012}
\begin{defi} We call a varifold $V=\underline{v}(\Sigma,\theta)$ a $W^{2,2}$-conformal immersion if there exists a Riemann surface $\Sigma_0$ and a conformal parameterization $f\in \Sigma_0 \to \Sigma$ such that $f\in W^{2,2}(\Sigma_0,\mathbb{R}^n)$ and $df\otimes df=e^{2u}g_0$ satisfies $\|u\|_{L^\infty}<+\infty$, where $g_0$ is a fixed smooth metric on $\Sigma_0$.  
\end{defi}
\begin{theorem}\label{sharp constant}
Assume $V=\underline{v}(\Sigma,\theta)$ is an integral varifold in $\mathbb{R}^3$ with finite volume and $\mathcal{W}(V)<2\pi^2$. Then, 
\begin{enumerate}[1)]
\item either $\Theta(\mu,x)<\frac{3}{2}$ for any $x\in \Sigma$  and $V$ is a $W^{2,2}$ conformal embedded sphere in $\mathbb{R}^3$,
\item or there exists $x_0\in \Sigma$ such that  $\Theta(\mu,x_0)\ge \frac{3}{2}$. In this case, we have  $\mathcal{W}(V)\ge 6\pi$. Moreover, the equality $\mathcal{W}(V)=6\pi$ holds if and only if $\Sigma$ is the double bubble up to a conformal transformation. 
\end{enumerate} 
\end{theorem}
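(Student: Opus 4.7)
The plan is to apply the pointwise Li--Yau bound coming from Lemma~\ref{Willmore lower bound},
\begin{equation*}
\Theta(\mu,x)\;\le\;\frac{1}{16\pi}\int_{\mathbb{R}^3}|H|^2\,d\mu\;=\;\frac{\mathcal{W}(V)}{4\pi},\qquad\forall\,x\in\Sigma,
\end{equation*}
with equality iff $H+4(x-x_0)^{\perp}/|x-x_0|^2\equiv 0$ on $\Sigma$. Under $\mathcal{W}(V)<2\pi^2<8\pi$ we have $\Theta(\mu,x)<2$ everywhere, so the natural dichotomy is whether $\sup_\Sigma\Theta$ reaches $3/2$. If some $x_0\in\Sigma$ achieves $\Theta(\mu,x_0)\ge 3/2$, the bound immediately yields $\mathcal{W}(V)\ge 4\pi\cdot\tfrac{3}{2}=6\pi$, which is the nontrivial half of~$2)$.

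For part~$1)$ assume $\Theta(\mu,x)<3/2$ for every $x\in\Sigma$. Since $|H|^2\in L^1(d\mu)$ gives $\int_{B_r(x)}|H|^2d\mu\to 0$ as $r\to 0$, every tangent varifold at $x$ is a stationary integral $2$-cone, and in $\mathbb{R}^3$ the only such cones with density strictly below $3/2$ are multiplicity-one planes; thus $\Theta\equiv 1$ pointwise and $V$ is multiplicity one. For each $x$ one can then choose $r_x>0$ so that $\mu(B_{r_x}(x))/(\pi r_x^2)\le 1+\varepsilon$ and $\int_{B_{r_x}(x)}|H|^2d\mu\le\varepsilon^2$, i.e.\ the critical Allard hypotheses \eqref{density condition}--\eqref{mean curvature bound} hold at scale $r_x$; Theorem~\ref{2022b-main} then gives a $W^{2,2}$-conformal parametrization in $B_{r_x}(x)$. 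A standard overlapping-patch gluing yields a global $W^{2,2}$-conformal immersion of a closed Riemann surface $\Sigma_0$; the resolution of the Willmore conjecture by Marques--Neves forces $\Sigma_0\cong S^2$ since $\mathcal{W}(V)<2\pi^2$, and embeddedness is automatic from $\Theta\equiv 1$.

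For the equality direction of~$2)$, $\mathcal{W}(V)=6\pi$ forces both $\Theta(\mu,x_0)=3/2$ and saturation in Lemma~\ref{Willmore lower bound}. By Lemma~\ref{inverted lemma}, the inversion $\tilde V$ at $x_0$ is therefore a \emph{stationary} integral $2$-varifold in $\mathbb{R}^3$ with $\Theta(\tilde\mu,\infty)=3/2$. By Allard monotonicity for stationary varifolds, $\Theta(\tilde\mu,y)\le 3/2$ at every $y$. If $\Theta<3/2$ everywhere on $\tilde\Sigma$, integrality plus upper semicontinuity force $\Theta\equiv 1$, and Allard regularity makes $\tilde V$ a smooth embedded minimal surface whose asymptotic density must be integer-valued, contradicting $\Theta(\tilde\mu,\infty)=3/2$. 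Hence some $y$ attains $\Theta(\tilde\mu,y)=3/2$; coincidence of the monotone density ratio at $\rho\to 0$ and $\rho\to\infty$ forces $\tilde V$ to be a cone with vertex $y$, and the classification of stationary integral $2$-cones of density $3/2$ in $\mathbb{R}^3$ identifies $\tilde V$ with the $Y$-cone (three half-planes meeting at $120^\circ$). Inverting back identifies $\Sigma$ with the standard double bubble, while the converse $\mathcal{W}(\text{double bubble})=6\pi$ is a direct computation using the $120^\circ$ junction law.

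The \textbf{main obstacle} is the cone-classification step in the equality case: propagating the equality $\Theta=3/2$ from the asymptotic statement at infinity to an interior density-$3/2$ point on $\tilde V$, and then identifying the resulting stationary integral $2$-cone of density $3/2$ in $\mathbb{R}^3$ as the $Y$-cone. The gluing argument in part~$1)$ and this Plateau-type rigidity for $\tilde V$ are the two genuinely non-routine ingredients; everything else is a direct application of the monotonicity and regularity results assembled in Sections~\ref{sec:Pre}--\ref{sec:Quantitative plane}.
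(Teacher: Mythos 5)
Your proof follows the same route as the paper's: the Li--Yau bound for the $\mathcal{W}(V)\geq 6\pi$ direction, the Allard--Almgren structure of stationary $1$-varifolds on $\mathbb{S}^2$ to classify low-density tangent cones, Allard regularity and Theorem~\ref{2022b-main} for the $W^{2,2}$ structure, Marques--Neves for topology and embeddedness in case $1)$, and inversion at $x_0$ plus the cone rigidity of the monotone density ratio in the equality case of $2)$. The one step you leave as a black box (and flag as the main obstacle) is precisely what the paper works out with the splitting/spine argument borrowed from Simon: having produced a stationary integral $2$-cone $\tilde V$ in $\mathbb{R}^3$ of vertex density $3/2$, the paper considers the spine $L=\{x:\Theta(\tilde\mu,x)=\Theta(\tilde\mu,q)\}$, notes that $L$ is linear and $\tilde V$ splits off $\mathbb{R}^{\dim L}$, and uses the Allard--Almgren theorem on the $1$-dimensional link $S\subset\mathbb{S}^2$ to show that $S$ cannot be smooth and therefore has a junction $\omega$ with $\Theta(S,\omega)\geq 3/2$; since $3/2$ is also the maximum, $\omega\in L$ and so $\dim L\geq 1$, reducing the classification to stationary $1$-cones of density $3/2$ in $\mathbb{R}^2$, which is the $Y$-network. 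You should supply this dimension-reduction argument to close the gap; the bare assertion that ``the classification identifies $\tilde V$ with the $Y$-cone'' is exactly the non-routine content.

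One further small mismatch: in the ``all densities $<3/2$'' subcase you derive the contradiction from ``asymptotic density of a smooth embedded minimal surface must be integer-valued,'' while the paper concludes that $\tilde\Sigma$ must be a plane. Both are parallel appeals to the structure of smooth complete embedded minimal surfaces of finite total curvature (note $\int_{\tilde\Sigma}|\tilde A|^2\,d\tilde\mu<\infty$ follows from the inversion since $\tilde H\equiv 0$ and the trace-free part is conformally controlled), and neither is spelled out in detail; if you wish to make this rigorous you should cite the Osserman-type classification of such ends, and then the conclusion is that the density at infinity equals the number of ends counted with multiplicity, an integer, which contradicts $\Theta(\tilde\mu,\infty)=3/2$. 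This matches the paper's intent.
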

\begin{proof}
 Since $W(V)+\mu_V(\mathbb{R}^n)<+\infty$, by the monotonicity formula and Allard compactness theorem, we know for every $p\in \Sigma$, any tangent cone $V_p=\underline {v}(T_p,\theta_p)$ of  $V$ at $p$ is an integral stationary cone  with $T_p=C(S_p), \theta_p(\lambda \omega)=\theta_p(\omega)$ for any $\omega\in S_p:=T_p\cap \mathbb{S}^{2}$, where $\underline{v}(S_p,\theta_p(\omega))$ is an integral stationary 1-varifold in $\mathbb{S}^2$.  Moreover,  the monotonicity formula implies 
\begin{align*}
\Theta(V_p,x)\le \Theta(V_p,\infty)=\Theta(V_p,0)=\Theta(\mu,p), \forall x\in T_p, 
\end{align*}
%and 
%\begin{align*}
%L:=\{x\in T_p| \Theta(V_p,x)=\Theta(V_p,0)\}
%\end{align*}
%is a linear space such that $V_p$ is a splitting cone with %respect to $L$\cite{LS96}.
Noting that for any $\omega\in T_p\cap \mathbb{S}^2$, there holds
\begin{align*}
T_\omega (V_p)=\mathbb{R}\oplus T_\omega (S_p)
\end{align*}
and hence 
\begin{align*}
\Theta(V_p,\omega)=\Theta(T_\omega V_p,0)=\Theta(T_\omega S_p,0)=\Theta(S_p,\omega).
\end{align*}
By the structure of stationary $1$-varifold\cite{AA-1976}, we know when $\Theta(\mu,p)<\frac{3}{2}$, there holds  $\Theta(S_p,\omega)=1$ for any $\omega\in S_p$ and  hence $S_p\subset \mathbb{S}^2$ is a great circle.  This further implies $T_p=C(S_p)=\mathbb{R}^2$  and $\theta_p\equiv 1$ by the constant theorem\cite[Theorem 41.1]{LS83}.

1). So, in the case $\Theta(\mu,x)<\frac{3}{2}$ for any $x\in \Sigma$,  we know $\Theta(\mu,x)=\Theta(T_x,0)\equiv 1$ for any $x\in \Sigma$.  Thus by Theorem \ref{2022b-main}, we know $V$ is a $W^{2,2}$ immersion in $\mathbb{R}^3$,i.e., there 
 is a compact(by Lemma \ref{compactness}) Riemann surface $\Sigma_0$ and a conformal parameterization $f:\Sigma_0\to \Sigma$ such $f\in W^{2,2}(\Sigma_0,\mathbb{R}^3)$ with $df\otimes df=e^{2u}g_0$ and $\|u\|_{L^\infty}\le C$.  This implies $\Sigma$ can be approximated by smooth oriented immersions. So, by Marques and Neves' theorem (Willmore conjecture)\cite{MN-2014}, we know $\Sigma_0=\mathbb{S}^2$ and the map $f:\mathbb{S}^2\to \Sigma\subset \mathbb{R}^3$ is embedded.  

 2).  In the case $\Theta(\mu,x_0)\ge \frac{3}{2}$ for some $x_0\in\Sigma$, by the monotonicity formula (Lemma \ref{Willmore lower bound}), we know $W(V)\ge 6\pi$. Moreover,  $W(V)=6\pi$ if and only if  $\Theta(\mu,x_0)=\frac{3}{2}$ and
 \begin{align*}
 \int_{\mathbb{R}^n}|\frac{H}{4}+\frac{\nabla^{\bot}r}{r}|^2d\mu=0,
 \end{align*}
 where $r=|\cdot -x_0|$.  Letting $f_0(x)=\frac{x-x_0}{|x-x_0|^2}+x_0$ and $\tilde{V}=f_{0\sharp}V=\underline{v}(\tilde{\Sigma},\tilde{\theta})$, then, by the inversion Lemma \ref{inverted lemma}, we know $\tilde{V}$ is an integral stationary varifold with 
 \begin{align*}
 \Theta(\tilde{\mu},\infty)=\Theta(\mu,x_0)=\frac{3}{2}. 
 \end{align*}
 Now, we consider $q\in \tilde{\Sigma}$. If $\Theta(\tilde{\mu},q)<\frac{3}{2}$ for any $q\in \tilde{\Sigma}$, then the same argument as the first case implies $\tilde{\Sigma}$ is a smooth minimal surface in $\mathbb{R}^3$ with $\Theta(\tilde{\Sigma},\infty)<\frac{3}{2}$, which further implies $\tilde{\Sigma}$ is a plane and hence $\Sigma$ is a round sphere with multiplicity one (by the constant theorem). This contradicts to the fact $\Theta(\mu,x_0)=\frac{3}{2}$. So, we know there exists $q\in \tilde{\Sigma}$ such that 
 \begin{align*}
 \Theta(\tilde{\mu},q)=\Theta(\tilde{\mu},\infty)=\frac{3}{2}. 
 \end{align*}
 The monotonicity formula again implies $\tilde{V}=\underline{v}(C(S),\tilde{\theta})$ is a stationary cone with respect to $q$. Without loss of generality, assume $q=0$, then  $V_1=\underline{v}(S,\tilde{\theta}(\omega))$ is an integral stationary 1-varifold in $S^2$.  Moreover, the monotonicity formula implies 
 \begin{align*}
 L:=\{x\in \tilde{\Sigma}| \Theta(\tilde{\mu},x)\ge \Theta(\tilde{\mu},q)\}
 \end{align*}
 is a linear space and $\tilde{V}$ is a splitting cone with respect to $L$\cite{LS96}. Again by the structure of stationary 1-varifolds\cite{AA-1976} and $\Theta(\tilde{\mu},q)=\frac{3}{2}>1$, we know $S$ is not smooth and there exists $\omega\in S$ such that 
 \begin{align*}
 \Theta(\tilde{\mu},\omega)=\Theta(V_1,\omega)\ge \frac{3}{2}. 
 \end{align*}
 Thus we know $\text{dim} L\ge 1$ and 
 \begin{align*}
 \tilde{\Sigma}=\mathbb{R}\times C(S_1), 
 \end{align*}
 where $C(S_1)\subset \mathbb{R}^2$ is a stationary cone with $\Theta(C(S_1),\infty)=\frac{3}{2}$. As a result, $S_1$ is the $Y$-singularity cone  and hence $\tilde{\Sigma}=\mathbb{R}\times Y$, which implies $\Sigma=f_0^{-1}(\Sigma)$ is the double bubble up to a conformal transformation. 
\end{proof}
\begin{remark}  In the above theorem, Case $2)$ also holds for $n\ge 3$ by the same argument, and Case $1)$ needs $n=3$ since Marques and Neves' theorem is applied in the proof.  For $n\ge 3$, even it is assumed $w(V)<6\pi$=the Willmore energy of Veronese embedding of   $\mathbb{RP}^2$ in $\mathbb{R}^4$, it is not known whether $V$ is a topological sphere. It is exactly a conjecture by Kusner which says a surface in $\mathbb{R}^4$ with Willmore energy smaller than $6\pi$ must be a sphere\cite{K96}.  
\end{remark}
\begin{corollary}\label{corollary} When $n=3$, the conclusion of Theorem \ref{main theorem} holds for $W(V)=4\pi+\delta^2<6\pi$ and $6\pi$ is the sharp constant.
\end{corollary}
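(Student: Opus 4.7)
The plan has two halves: the positive direction (the quantitative estimate holds for every $W(V)=4\pi+\delta^{2}<6\pi$) and the sharpness assertion that $6\pi$ cannot be improved.

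For the positive direction, the first move is to run Theorem \ref{sharp constant}. Since Case 2) forces $W(V)\ge 6\pi$, the strict inequality $W(V)<6\pi$ lands us in Case 1): $\Theta(\mu,x)<\tfrac{3}{2}$ for every $x\in\Sigma$ and, moreover, $\Theta\equiv 1$ so that $V$ is realized by a $W^{2,2}$ conformal embedding $f:\mathbb{S}^{2}\to\Sigma\subset\mathbb{R}^{3}$. In particular $\Sigma$ is of genus zero with an $L^{2}$ second fundamental form, so the Gauss--Bonnet identity for $W^{2,2}$ conformal immersions (Remark \ref{vanishing of Gauss curvature} pointwise) yields
\begin{equation*}
\tfrac{1}{2}\|\mathring{A}\|_{L^{2}(\Sigma)}^{2}=W(V)-4\pi=\delta^{2},
\qquad\text{i.e.}\qquad\|\mathring{A}\|_{L^{2}(\Sigma)}^{2}=2\delta^{2}.
\end{equation*}

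Since $2\delta^{2}<4\pi<2e_{3}=8\pi$, the smallness hypothesis in Theorem \ref{quantitative umbilical} is satisfied with uniform gap $\tau=8\pi-2\delta^{2}\ge 4\pi$. I would then invoke Theorem \ref{quantitative umbilical} to produce a conformal parameterization $\Phi:\mathbb{S}^{2}\to\Sigma$ with $d\Phi\otimes d\Phi=e^{2v}g_{\mathbb{S}^{2}}$ and
\begin{equation*}
\|\Phi-\mathrm{id}_{\mathbb{S}^{2}}\|_{W^{2,2}(\mathbb{S}^{2})}+\|v\|_{L^{\infty}(\mathbb{S}^{2})}\le C(n,\tau)\,\|\mathring{A}\|_{L^{2}(\Sigma)}\le C\delta,
\end{equation*}
with $C$ uniform for $\delta^{2}<2\pi$. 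After translating the centre of mass to the origin, this yields exactly \eqref{estimate}, which proves the conclusion of Theorem \ref{main theorem} in the whole range $\delta^{2}<2\pi$ and hence shows $\delta_{0}^{2}(3)\ge 2\pi$.

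For the sharpness $\delta_{0}^{2}(3)\le 2\pi$, I would invoke the standard double bubble $V_{db}$ of \cite{HMRR02} as in Case 2) of Theorem \ref{sharp constant}: it is an integral varifold in $\mathbb{R}^{3}$ with $W(V_{db})=6\pi$ that is not conformal to a round sphere. Rescaling so that $\mu_{V_{db}}(\mathbb{R}^{3})=4\pi$ (permissible by the scale invariance of Willmore energy) produces an integral varifold saturating $\delta^{2}=2\pi$ whose support cannot be written as a small $W^{2,2}$ graph over $\mathbb{S}^{2}$; a sequence of smoothed varifolds converging to it provides the obstruction, closing the sharp constant.

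The main obstacle, and the only subtle point, is the application of Theorem \ref{quantitative umbilical} (stated for smooth embedded spheres) to the $W^{2,2}$ conformal embedding produced in Case 1) of Theorem \ref{sharp constant}. I expect two ways around this: first, the proofs of \cite{dLMu2, LS-2014} use only $W^{2,2}$ regularity of the immersion together with $L^{2}$ control on $\mathring{A}$ and a Hodge-type argument, so a direct reading extends the statement; alternatively, one can mollify $f$ to obtain smooth embedded spheres $f_{k}$ with $\|\mathring{A}_{k}\|_{L^{2}}^{2}\to 2\delta^{2}$, apply Theorem \ref{quantitative umbilical} to each $f_{k}$ with the uniform constant $C(n,\tau)$ (since $\tau\ge 4\pi$), and pass to a weak $W^{2,2}$ limit of the resulting conformal parameterizations, using weak lower semicontinuity of the Sobolev norm and Arzel\`{a}--Ascoli for the conformal factor.
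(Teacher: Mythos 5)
Your positive-direction argument is a genuinely different route from the paper's and, if it could be completed, would be strictly stronger; but as written it has a gap, and the paper avoids the need for that machinery entirely.

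The paper's own treatment of the range $\delta^2\in[\delta_0^2,2\pi)$ exploits the following elementary observation: once $\delta$ is bounded below (by $\delta_0$), the target estimate $\|\Phi-\mathrm{id}\|_{W^{2,2}}+\|v\|_{C^0}\le C\delta$ is equivalent, after enlarging $C$ to $C/\delta_0$, to the non-quantitative boundedness statement $\|\Phi-\mathrm{id}\|_{W^{2,2}}+\|v\|_{C^0}\le C$. So the paper needs only to prove that the two norms are uniformly bounded. For that, Theorem~\ref{sharp constant} case 1) gives a $W^{2,2}$ conformal embedding of $\mathbb{S}^2$ onto $\Sigma$, then \cite[Proposition 3.1]{LS-2014} gives the uniform $L^\infty$ bound on the conformal factor, and Lemma~\ref{compactness} together with the elliptic estimate for the mean curvature equation $\Delta f=He^{2u}$ gives the uniform $W^{2,2}$ bound on $f$. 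That is all. No version of Theorem~\ref{quantitative umbilical} for nonsmooth surfaces is invoked or needed. The sharpness step (the double bubble) is the same in both your argument and the paper.

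Your route, by contrast, aims at a real quantitative statement uniformly in $\delta^2<2\pi$: after Gauss--Bonnet converts $W(V)=4\pi+\delta^2$ into $\|\mathring{A}\|_{L^2}^2=2\delta^2<4\pi<2e_3$, you would apply Theorem~\ref{quantitative umbilical} with $\tau\ge4\pi$. This is the harder and more informative conclusion, but the gap you flag is genuine and not merely cosmetic: Theorem~\ref{quantitative umbilical} is stated for smooth embedded spheres, and neither workaround is carried out. The first (``the proofs work at $W^{2,2}$ level'') would need a careful re-reading of \cite{dLMu2,LS-2014} verifying that every step tolerates $W^{2,2}$ conformal immersions; the second (mollification) is delicate because a naive mollification of $f$ need not stay an embedding, need not stay conformal, and controlling $\|\mathring{A}_k\|_{L^2}^2\to2\delta^2$ while preserving the topology requires an argument (e.g.~via the approximation scheme for $W^{2,2}$ conformal immersions in \cite{KL-2012}), not just a citation of Arzel\`a--Ascoli. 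Until one of those is completed, your proof is incomplete in exactly the place you point to, whereas the paper's proof closes because it never asks for that uniform quantitative constant in the first place.

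One smaller point: the paper's Corollary~\ref{corollary} is then a weaker assertion than one might hope — for $\delta$ away from zero it asserts no genuine smallness, only boundedness with a constant renamed as $C\delta$. Your approach, if filled in, would upgrade the corollary to a true decay estimate all the way up to $\delta^2\to2\pi$. That is a meaningful difference worth being aware of when reading the statement.
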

\begin{proof}
Let $\delta_0(3)$ be the constant occurs in Theorem \ref{main theorem}. 
When $\delta\in (0,\delta_0(3))$, the conclusion holds by Theorem \ref{main theorem}. When $\delta^2\in[\delta_0^2,2\pi)$,
since $V=\underline{v}(\Sigma,\theta)$ is an integral varifold in $\mathbb{R}^3$ with finite volume and $\mathcal{W}(V)<6\pi$, by Theorem \ref{sharp constant}, we know  $V$ is a $W^{2,2}$ conformal embedded sphere. Thus by \cite[Proposition 3.1]{LS-2014}, we know there exists a conformal parameterization $f:\mathbb{S}^2\to \Sigma$ with pull back metric $g=df\otimes df=e^{2u}g_{\mathbb{S}^2}$ and $\|u\|_{L^\infty(\mathbb{S}^2)}\le C.$ 
Now, by Lemma \ref{compactness} we know $\|f\|_{L^\infty}\le C$, which combining with the elliptic estimates of the mean curvature equation gives $\|f\|_{W^{2,2}}\le C$. As a result, there holds 
\begin{align*}
\|u\|_{L^{\infty}}+\|f-id\|_{W^{2,2}}\le C\delta_0\le C\delta. 
\end{align*}
Finally, the double bubble example in case $2)$ of Theorem \ref{sharp constant} implies $6\pi$ is the sharp constant. 
\end{proof}
\begin{bibdiv}
\begin{biblist}

\bib{AA-1976}{article}{
  title={The structure of stationary one dimensional varifolds with positive density},
  author={Allard, William K.},
  author={Almgren, Frederick Justin, Jr.},
  journal={Invent. Math.},
  volume={34},
  number={2},
  pages={83--97},
  year={1976},
  publisher={}
}
 \bib{BK}{article}{
 author={Bauer, Matthias},
 author={ Kuwert, Ernst},
   title={Existence of minimizing Willmore surfaces of prescribed genus},
   journal={Int. Math. Res. Not.},
   volume={},
   date={2003},
   number={10},
   pages={553-576},
    issn={},
   review={},
   doi={},
   } 
\bib{BZ-2022b}{article}{
title={Bi-Lipschitz regularity of 2-varifolds with the critical Allard condition},
author={Bi, Yuchen},
author={Zhou, Jie},
journal={preprint}
}
\bib{B-1978}{book}{
  title={The motion of a surface by its mean curvature},
  author={Brakke, Kenneth A.},
  publisher={Princeton University Press},
  year={1978},
}

\bib{CLMS-1993}{article}{
  title={Compensated compactness and Hardy spaces},
  author={Coifman, Ronald},
  author={Lions, Pierre-Louis},
  author={Meyer, Yves},
  author={Semmes, Stephen},
  journal={Journal de Math{\'e}matiques Pures et Appliqu{\'e}es},
  year={1993},
  volume={72},
  pages={247-286}
}

\bib{dLMu}{article}{
 author={De Lellis, Camillo},
   author={Müller, Stefan},
   title={Optimal rigidity estimates for nearly umbilical surfaces},
   journal={ J. Differential Geom.},
   volume={ 69},
   date={2005},
   number={1},
   pages={75-110},
    issn={},
   review={MR2169583 (2006e:53078)},
   doi={},
   }
\bib{dLMu2}{article}{
 author={De Lellis, Camillo},
   author={Müller, Stefan},
   title={ A $C^0$ estimate for nearly umbilical surfaces},
   journal={ Calc. Var. Partial Differential Equations},
   volume={ 26},
   date={2006},
   number={3},
   pages={283-296},
    issn={},
   review={MR2232206 (2007d:53003)},
   doi={},
   }

\bib{HMRR02}{article}{
  title={Proof of the double bubble conjecture},
  author={Hutchings, Michael},
  author={Morgan, Frank},
  author={Ritor\'e, Manuel},
  author={Ros, Antonio},
  journal={Ann.of Math.},
  volume={(2)155},
  number={2},
  pages={459--489},
  year={2002},
  publisher={}
}
\bib{K96}{book}{
  title={Estimates for the biharmonic energy on unbounded planar domains, and the existence of
surfaces of every genus that minimize the squared-mean-curvature integral.In: Elliptic and Parabolic
Methods in Geometry},
  author={Kusner, R.},
  year={1996},
  pages={67-72},
  publisher={ AK Peters, Wellesley}
}

\bib{KL-2012}{article}{
title={$W^{2,2}$-conformal immersions of a closed Riemmann surface into $\mathbb{R}^n$},
author={Kuwert, Ernst},
author={Li, Yuxiang},
journal={Communications in Analysis and Geometry},
year={2012},
volume={20},
number={2},
pages={313-340}
}
 \bib{KLS}{article}{
 author={Kuwert, Ernst},
 author={Li, Yuxiang},
   author={Schätzle, Reiner Michael},
   title={ The large genus limit of the infimum of the Willmore energy},
   journal={Amer. J. Math.},
   volume={132 },
   date={2010},
   number={1},
   pages={37-51},
    issn={},
   review={},
   doi={},
   }
  \bib{KS}{article}{
  title={Removability of point singularities of Willmore surfaces},
  author={Kuwert, Ernst} ,
  author={Sch{\"a}tzle, Reiner},
  journal={Annals of Mathematics},
  pages={315--357},
  year={2004},
  publisher={JSTOR}
}

	\bib{LS-2014}{article}{
  title={Optimal rigidity estimates for nearly umbilical surfaces in arbitrary codimension},
  author={Lamm, Tobias},
  author={Sch\"atzle, Michael},
  journal={Geom. Funct. Anal.},
  year={2014},
  volume={24},
  pages={2029-2062}
}
\bib{LY-1982}{article}{
  title={A new conformal invariant and its applications to the Willmore conjecture and the first eigenvalue of compact surfaces},
  author={Li, Peter},
  author={Yau, Shing Tung},
  journal={Invent. Math.},
  year={1982},
  volume={69},
  number={2},
  pages={269-291}
}
  \bib{MN-2014}{article}{
  title={Min-Max theory and the Willmore Conjecture},
  author={Marques, Fernando C.},
  author={Neves, Andr\'e},
  journal={Ann. of Math.},
  year={2014},
  number={2},
  volume={179},
  pages={683-782}
}

\bib{MS-1995}{article}{
  title={On surfaces of finite total curvature},
  author={M{\"u}ller, Stefan},
  author={{\v{S}}ver{\'a}k, Vladim{\i}r},
  journal={Journal of Differential Geometry},
  volume={42},
  number={2},
  pages={229--258},
  year={1995},
  publisher={Lehigh University}
}

 \bib{LS83}{book}{
  title={Lectures on geometric measure theory},
  author={Simon, Leon},
  year={1983},
  publisher={The Australian National University, Mathematical Sciences Institute}
}
 \bib{LS93}{article}{
  title={Existence of surfaces minimizing the Willmore functional},
  author={Simon, Leon},
  journal={Communications in Analysis and Geometry},
  volume={1},
  number={2},
  pages={281--326},
  year={1993},
  publisher={International Press of Boston}
}

  \bib{LS96}{book}{
  title={Theorems on the regularity and singularity of minimizing harmonic maps},
  author={Simon, Leon},
  journal={},
  volume={},
  number={},
  pages={152pp},
  year={1996},
  issn={3-7643-5397-X},
  publisher={ETH Zürich Birkhäuser Verlag, Basel}
}
\bib{MS95}{article}{
 author={M\"{u}ller, S.},
 author={\v{S}ver\'{a}k, V.},
   title={On surfaces of finite total curvature.},
   journal={J. Differential Geom.},
   volume={42},
   date={1995},
   number={2},
   pages={229-258},
    issn={},
   review={MR1366547 (97b:53007)},
   doi={},
   }

    \bib{Sch13}{article}{
 author={Schätzle, Reiner Michael},
   title={Estimation of the conformal factor under bounded Willmore energy},
   journal={Math. Z.},
   volume={ 274 },
   date={2013},
   number={3-4},
   pages={1341–1383},
    issn={},
   review={MR3078270},
   doi={},
   }

\bib{Z22}{article}{
  title={Topology of surfaces with finite Willmore energy},
  author={Zhou, Jie},
  journal={International Mathematics Research Notices},
  volume={2022},
  number={9},
  pages={7100--7151},
  year={2022},
  publisher={Oxford University Press}
}

\end{biblist}
\end{bibdiv}	
\end{document}